\newtheorem{theorem}{Theorem}[section]
\theoremstyle{plain}
\newtheorem{special case}{Special Case}
\newtheorem{corollary}{Corollary}[section]
\newtheorem{example}{Example}[section]
\newtheorem{lemma}{Lemma}[section]
\newtheorem{remark}{Remark}[section]
\numberwithin{equation}{section}
\begin{document}
\title[Nonlinear sampling Kantorovich operators]{Quantitative estimates for
nonlinear\\ sampling Kantorovich operators}
\author{Nursel \c{C}etin}
\author{Danilo Costarelli}
\author{Gianluca Vinti}
\address{Ankara Hac\i\ Bayram Veli University, Polatl\i\ Faculty of Science
and Letters, Department of Mathematics, 06900, Ankara, Turkey}
\email{nurselcetin07@gmail.com}
\address{University of Perugia, Department of Mathematics and Computer
Science 1, Via Vanvitelli, 06123 Perugia, Italy}
\email{danilo.costarelli@unipg.it}
\address{University of Perugia, Department of Mathematics and Computer
Science 1, Via Vanvitelli, 06123 Perugia, Italy}
\email{gianluca.vinti@unipg.it}
\subjclass{41A25, 41A35, 46E30, 47A58, 47B38, 94A12.}
\keywords{Nonlinear sampling Kantorovich operators, Orlicz spaces, Modulus
of smoothness, Quantitative estimates, Lipschitz classes.}

\begin{abstract}
In this paper, we establish quantitative estimates for nonlinear
sampling Kantorovich operators in terms of the modulus of continuity in the
setting of Orlicz spaces. 
This general frame
allows us to
directly deduce 
some quantitative estimates of
approximation in $L^{p}$-spaces, $1\leq p<\infty $,
and in other well-known instances of Orlicz spaces, such as the Zygmung and the exponential spaces. Further, the qualitative order of approximation has been obtained assuming $f$ in suitable Lipschitz classes. The above estimates achieved in the general setting of Orlicz spaces, have been also improved in the $L^p$-case, using a direct approach suitable to this context. 
At the end, we consider the particular cases of the nonlinear sampling Kantorovich operators constructed by using some special kernels.
\end{abstract}

\maketitle
\author{}

\section{Introduction}

The theory of nonlinear integral operators in connection with approximation
problems has been started by Musielak (see e.g. \cite{M1}-\cite{M4}) and subsequently, it has been
extensively developed by Bardaro, Musielak and Vinti in \cite{Bardaro1}, and
also studied in various papers by other authors (see e.g. \cite{BV98, VI2001, M, L1, L2, AV3, BKV1, AV4, C-V2,COSA,CSAMBV1}).

The linear version of the sampling Kantorovich type operators has been first introduced in \cite%
{Bardaro2} in one-dimensional setting and there, some approximation results
in the setting of Orlicz spaces $L^\varphi(\mathbb{R})$ have been achieved.

  It is well-known that, Orlicz spaces are very general spaces including,
among its various special cases, the $L^{p}$-spaces (see e.g. \cite%
{Musielak2, Musielak, Bardaro1}). Subsequently, these operators were extended in \cite{Vinti,VIZA1} to the nonlinear case. The order of approximation for nonlinear
sampling Kantorovich operators have been studied in \cite{C-V2} considering functions in suitable Lipschitz classes both in the space of
uniformly continuous and bounded functions and in Orlicz spaces. Results concerning the multidimensional version of these operators have been
obtained in \cite{C-V3}.

In the last forty years, the study of approximation results by sampling-type
operators (in linear and nonlinear cases) has been a wide research
area both red from a theoretical and an application point of view, such as signal and
image processing. In particular, sampling type operators (in their multivariate version) can be used in order to reconstruct and approximate
images, see e.g. \cite{ R, K, CSV2, C-V7}.

Concerning the problem of the order of approximation
for the (linear) sampling Kantorovich operators, a quantitative estimate in the setting of Orlicz spaces in terms of modulus of continuity
has been very recently established in \cite{C-V4}. On the other hand,
quantitative estimates with respect to the Jordan variation for
sampling-type operators have been obtained in \cite{L3} exploiting a
suitable modulus of smoothness for the space of absolutely continuous
functions $AC\left( 
\mathbb{R}
\right)$. 
However, a quantitative approach for nonlinear sampling
Kantorovich operators has not been addressed as yet.

In the present paper, we prove some quantitative estimates for the nonlinear sampling
Kantorovich operators using the modulus of continuity of $L^\varphi(\mathbb{R})$.
Further, the qualitative order of approximation is established for functions belonging to suitable Lipschitz classes. In the particular case of $L^p$-approximation, we directly established a quantitative estimate for the order of approximation, with the main purpose to obtain a sharper estimate than that one achieved in the general case.
If the latter estimated is applied for the linear version of the sampling Kantorovich operators, we become able to improve the result that could be derived from Theorem 3.1 of \cite{C-V4}.
Finally, we give some concrete examples of nonlinear sampling Kantorovich operators constructed by using  Fej\'{e}r and B-spline kernels, establishing some particular results in these instances.

\section{Preliminaries} \label{sec2}

In this section, we recall the necessary background material related to
Orlicz spaces used throughout the paper.

We denote by $C\left( 
\mathbb{R}
\right) $ the space of all uniformly continuous and bounded functions $f:%
\mathbb{R}
\rightarrow 
\mathbb{R}
$ endowed with the norm $\left \Vert \cdot \right \Vert _{\infty }.$ Also, $%
C_{c}\left( 
\mathbb{R}
\right) $ is the subspace of $C\left( 
\mathbb{R}
\right) $ consisting of functions with compact support and $M\left( 
\mathbb{R}
\right) $ is the linear space of Lebesgue measurable functions $f:%
\mathbb{R}
\rightarrow 
\mathbb{R}
$ (or $%
\mathbb{C}
$).

A function $\varphi :%
\mathbb{R}
_{0}^{+}\rightarrow 
\mathbb{R}
_{0}^{+}$ is said to be a $\varphi -$function if it satisfies the following
conditions:

$\left( \Phi 1\right) $ $\varphi $ is a continuous function with $\varphi
\left( 0\right) =0;$

$\left( \Phi 2\right) $ $\varphi $ is a non-decreasing function and $\varphi
\left( u\right) >0$ for every $u>0;$

$\left( \Phi 3\right) \lim \limits_{u\rightarrow +\infty }\varphi \left(
u\right) =+\infty .$

Let us introduce the functional $I^{\varphi }$ associated to the $\varphi -$%
function $\varphi $ and defined by 
\begin{equation*}
I^{\varphi }\left[ f\right] :=\int\limits_{%
\mathbb{R}
}\varphi \left( \left\vert f\left( x\right) \right\vert \right) dx,
\end{equation*}%
for every $f\in M\left( 
\mathbb{R}
\right) .$ It is well-known that $I^{\varphi }$ is a modular functional
(see, e.g., \cite{Musielak, Bardaro1}) and the Orlicz space generated by $%
\varphi $ is defined by%
\begin{equation*}
L^{\varphi }\left( 
\mathbb{R}
\right) :=\left\{ f\in M\left( 
\mathbb{R}
\right) :I^{\varphi }\left[ \lambda f\right] <+\infty ,\text{ for some }%
\lambda >0\right\} .
\end{equation*}%
$L^{\varphi }\left( 
\mathbb{R}
\right) $ contains the subspace $E^{\varphi }\left( 
\mathbb{R}
\right) $ of all finite elements of $L^{\varphi }\left( 
\mathbb{R}
\right) ,$ i.e.,%
\begin{equation*}
E^{\varphi }\left( 
\mathbb{R}
\right) =\left\{ f\in L^{\varphi }\left( 
\mathbb{R}
\right) :I^{\varphi }\left[ \lambda f\right] <+\infty ,\text{ for every }%
\lambda >0\right\} .
\end{equation*}%
In general $E^{\varphi }\left( 
\mathbb{R}
\right) $ is a proper subspace of $L^{\varphi }\left( 
\mathbb{R}
\right) $ and they coincide if and only if the so-called $\Delta _{2}-$%
condition on $\varphi $\ is satisfied, i.e. there exists a constant $M>0$ such that %
\begin{equation}
\text{ \ \ \ \ \ \ \ \ \ }\frac{%
\varphi \left( 2u\right) }{\varphi \left( u\right) }\leq M,\text{ for every }%
u>0.  \tag{$\Delta _{2}$}
\end{equation}%
Examples of $\varphi -$functions\ satisfying $\Delta _{2}-$condition
are $\varphi \left( u\right) =u^{p}$ for $1\leq p<\infty $ (in this case $%
L^{\varphi }\left( 
\mathbb{R}
\right) =L^{p}\left( 
\mathbb{R}
\right) $) or 
$\varphi_{\alpha, \beta} \left( u\right) =u^{\alpha }\ln ^{\beta }\left(
e+u\right) $
for $\alpha \geq 1,\beta >0$ (in this case, 
the Orlicz spaces $L^{\alpha ,\beta }\left( 
\mathbb{R}
\right)$ is the interpolation space $L^{\alpha }\log ^{\beta }L\left(\mathbb{R}\right)$).

A concept of convergence in Orlicz spaces, called \textit{modular convergence%
}, was introduced in \cite{Musielak2}.

We say that a net of functions $\left( f_{w}\right) _{w>0}\subset L^{\varphi
}\left( 
\mathbb{R}
\right) $ is modularly convergent to $f\in L^{\varphi }\left( 
\mathbb{R}
\right) ,$ if there exists $\lambda >0$ such that%
\begin{equation}
I^{\varphi }\left[ \lambda \left( f_{w}-f\right) \right] =\int \limits_{%
\mathbb{R}
}\varphi \left( \lambda \left \vert f_{w}\left( x\right) -f\left( x\right)
\right \vert \right) dx\rightarrow 0,\text{ as }w\rightarrow +\infty .
\label{I}
\end{equation}%
Now, we can recall the definition of the modulus of continuity in Orlicz
spaces $L^{\varphi }\left( 
\mathbb{R}
\right) ,$ with respect to the modular $I^{\varphi }.$ For any fixed $f\in
L^{\varphi }\left( 
\mathbb{R}
\right) $ and for a suitable $\lambda >0,$ we denote%
\begin{equation}
\omega \left( f,\delta \right) _{\varphi }:=\sup_{\left \vert t\right \vert
\leq \delta }I^{\varphi }\left[ \lambda \left( f\left( \cdot +t\right)
-f\left( t\right) \right) \right] ,  \label{w}
\end{equation}%
with $\delta >0.$

In order to recall the class of operators we work with, we need some
additional concepts.

Let $\Pi:=\left( t_{k}\right) _{k\in 
\mathbb{Z}
}$ be a sequence of real numbers such that $-\infty <t_{k}<t_{k+1}<+\infty $
for every $k\in 
\mathbb{Z}
,$ $\lim \limits_{k\rightarrow \pm \infty }t_{k}=\pm \infty $ and there are
two positive constants $\Delta ,$ $\delta $ such that $\delta \leq \Delta
_{k}:=t_{k+1}-t_{k}\leq \Delta ,$ for every $k\in 
\mathbb{Z}
.$

In what follows, a function $\chi :%
\mathbb{R}
\times 
\mathbb{R}
\rightarrow 
\mathbb{R}
$ will be called a {\em kernel} if it satisfies the following conditions:

$\left( \chi 1\right) $ $k\rightarrow \chi \left( wx-t_{k},u\right) \in \ell
^{1}\left( 
\mathbb{Z}
\right) $, for every $\left( x,u\right) \in 
\mathbb{R}
^{2}$ and $w>0;$

$\left( \chi 2\right) $ $\chi \left( x,0\right) =0,$ for every $x\in 
\mathbb{R}
;$

$\left( \chi 3\right) $ $\chi $ is a $\left( L,\psi \right) $-Lipschitz
kernel, i.e., there exist a measurable function $L:%
\mathbb{R}
\rightarrow 
\mathbb{R}
_{0}^{+}$ and a $\varphi $-function $\psi ,$ such that%
\begin{equation*}
\left \vert \chi \left( x,u\right) -\chi \left( x,v\right) \right \vert \leq
L\left( x\right) \psi \left( \left \vert u-v\right \vert \right) ,
\end{equation*}%
for every $x,$ $u,$ $v\in 
\mathbb{R}
;$

$\left( \chi 4\right) $ there exists $\theta _{0}>0$ such that%
\begin{equation*}
\mathcal{T} _{w}\left( x\right) :=\sup_{u\neq 0}\left \vert \frac{1}{u}\sum
\limits_{k\in 
\mathbb{Z}
}\chi \left( wx-t_{k},u\right) -1\right \vert =\mathcal{O}\left( w^{-\theta_{0}}\right)
\end{equation*}%
as $w\rightarrow +\infty ,$ uniformly with respect to $x\in 
\mathbb{R}
.$

Moreover, we assume that the function $L$ of condition $\left( \chi 3\right) 
$ satisfies the following properties:

$\left( L1\right) $ $L\in L^{1}\left( 
\mathbb{R}
\right) $ and is bounded in a neighborhood of the origin;

$\left( L2\right) $ there exists $\beta _{0}>0$ such that%
\begin{equation*}
m_{\beta _{0},\Pi }\left( L\right) :=\sup_{x\in 
\mathbb{R}
}\sum \limits_{k\in 
\mathbb{Z}
}L\left( wx-t_{k}\right) \left \vert wx-t_{k}\right \vert ^{\beta
_{0}}<+\infty,
\end{equation*}
i.e., the absolute moment of order $\beta_0$ is finite.

Then, nonlinear sampling Kantorovich operators for a given kernel $\chi $
are defined by%
\begin{equation*}
\left( S_{w}f\right) \left( x\right) :=\sum \limits_{k\in 
\mathbb{Z}
}\chi \left( wx-t_{k},\frac{w}{\Delta _{k}}\int%
\limits_{t_{k}/w}^{t_{k+1}/w}f\left( u\right) du\right) ,\text{ \ \ \ \ \ \
\ }x\in 
\mathbb{R}
,
\end{equation*}%
where $f:%
\mathbb{R}
\rightarrow 
\mathbb{R}
$ is a locally integrable function such that the above series is convergent
for every $x\in 
\mathbb{R}
.$

The following lemma will be needed in the proof of our main theorems.

\begin{lemma}
\label{Lem1} (see \cite{Bardaro2}) Let $L$ be a function satisfying conditions $%
\left( L1\right) $ and $\left( L2\right) .$ Then, we have%
\begin{equation*}
m_{0,\Pi }\left( L\right) :=\sup_{u\in 
\mathbb{R}
}\sum\limits_{k\in 
\mathbb{Z}
}L\left( u-t_{k}\right) <+\infty .
\end{equation*}
\end{lemma}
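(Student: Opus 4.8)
The plan is to fix $u \in \mathbb{R}$ and split the series $\sum_{k \in \mathbb{Z}} L(u - t_k)$ into a finite ``central'' part, where the arguments $u - t_k$ lie near the origin, and a ``tail'' part, where they are bounded away from it; I then bound each part by a constant independent of $u$. The first observation is that condition $(L2)$ can be read as a statement uniform in $u$: since $w > 0$ is fixed while $x$ ranges over all of $\mathbb{R}$, the product $wx$ ranges over all of $\mathbb{R}$, so $(L2)$ is equivalent to
\[
\sup_{u \in \mathbb{R}} \sum_{k \in \mathbb{Z}} L(u - t_k) \, |u - t_k|^{\beta_0} = m_{\beta_0, \Pi}(L) < +\infty .
\]

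By $(L1)$, $L$ is bounded in a neighborhood of the origin, so I may fix $R > 0$ lying in that neighborhood together with a constant $M_0 > 0$ such that $L(y) \le M_0$ for all $|y| \le R$. For the fixed $u$, I write
\[
\sum_{k \in \mathbb{Z}} L(u - t_k) = \sum_{|u - t_k| \le R} L(u - t_k) + \sum_{|u - t_k| > R} L(u - t_k),
\]
which is legitimate termwise since $L$ takes values in $\mathbb{R}_0^+$.

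For the central sum, the key point — and the step I expect to be the main obstacle — is that only finitely many indices contribute, with a count uniform in $u$. This is precisely where the separation hypothesis $\Delta_k = t_{k+1} - t_k \ge \delta$ on the sampling sequence $\Pi$ is essential: the points $t_k$ falling in the interval $[u - R, u + R]$ are $\delta$-separated, hence their number is at most $2R/\delta + 1$. Since each corresponding summand is at most $M_0$, the central sum is bounded by $M_0\,(2R/\delta + 1)$, independently of $u$.

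For the tail sum I factor out the moment weight: on each term one has $|u - t_k| > R$, so $|u - t_k|^{-\beta_0} < R^{-\beta_0}$, and therefore
\[
\sum_{|u - t_k| > R} L(u - t_k) \le R^{-\beta_0} \sum_{|u - t_k| > R} L(u - t_k) \, |u - t_k|^{\beta_0} \le R^{-\beta_0}\, m_{\beta_0, \Pi}(L).
\]
Combining the two estimates gives $\sum_{k \in \mathbb{Z}} L(u - t_k) \le M_0\,(2R/\delta + 1) + R^{-\beta_0}\, m_{\beta_0, \Pi}(L)$, a bound free of $u$; taking the supremum over $u \in \mathbb{R}$ yields $m_{0, \Pi}(L) < +\infty$, as claimed.
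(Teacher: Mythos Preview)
Your proof is correct. The paper does not supply its own proof of this lemma, deferring instead to \cite{Bardaro2}; your argument --- splitting the sum into a central part controlled by the local boundedness of $L$ together with the $\delta$-separation of the nodes $(t_k)$, and a tail part controlled by the finite $\beta_0$-moment --- is precisely the standard proof found in that reference and the surrounding literature.
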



\section{Main Results}

In this section, firstly we give the following quantitative estimate for
nonlinear sampling Kantorovich operators by using the modulus of continuity in
Orlicz spaces. For this aim, we need a growth condition on the composition
of the function $\varphi ,$ which generates the Orlicz space and the
function $\psi $ of condition $\left( \chi 3\right) $.

Namely, given a $\varphi -$function $\varphi $, we require that there exists
a $\varphi -$function $\eta $ such that, for every $\lambda \in \left(
0,1\right) ,$ there exists a constant $C_{\lambda }\in \left( 0,1\right) $
satisfying%
\begin{equation}
\text{\ \ \ \ \ \ \ \ \ \ \ }\varphi \left( C_{\lambda }\psi \left(
u\right) \right) \leq \eta \left( \lambda u\right) ,  \tag{H}
\end{equation}%
for every $u\in 
\mathbb{R}
_{0}^{+}$,\ where $\psi $ is the $\varphi $-function of condition $\left( \chi
3\right) $ (see \cite{Bardaro1,Vinti,C-V1}).

\begin{theorem}
\label{Th1}Let $\varphi $ be a convex $\varphi -$function$.$ Suppose that $%
\varphi $ satisfies condition (H) with $\eta $ convex, $f\in L^{\varphi
+\eta }$\bigskip $\left( 
\mathbb{R}
\right) $ and also for any fixed $0<\alpha <1,$ we have%
\begin{equation}
w\int\limits_{\left\vert y\right\vert >1/w^{\alpha }}L\left( wy\right)
dy\leq M_{1}w^{-\alpha _{0}},\text{ as }w\rightarrow +\infty ,  \label{a1}
\end{equation}%
for suitable positive constants $M_{1},$ $\alpha _{0}$ depending on $\alpha $
and $L.$ Then, 
there exist $\mu >0$ and $\lambda_0>0$ such that%
\begin{eqnarray*}
I^{\varphi }\left[ \mu \left( S_{w}f-f\right) \right] &\leq &\frac{%
\left\Vert L\right\Vert _{1}}{3\delta m_{0,\Pi }\left( L\right) }\omega
\left( f,1/w^{\alpha }\right) _{\eta }+\frac{M_{1}I^{\eta }\left[ \lambda
_{0}f\right] }{3\delta m_{0,\Pi }\left( L\right) }w^{-\alpha _{0}} \\
&&+\frac{\Delta }{3\delta }\omega \left( f,\Delta /w\right) _{\eta }+\frac{%
I^{\varphi }\left[ \lambda _{0}f\right] }{3}w^{-\theta _{0}},
\end{eqnarray*}%
for every sufficiently large $w>0,$ where $m_{0,\Pi }\left( L\right)
<+\infty $ in view of Lemma \ref{Lem1} and $\theta _{0}>0$ is the constant
of condition $(\chi 4).$ Particularly, if $\mu >0$ is sufficiently small,
this inequality implies the modular convergence of nonlinear sampling
Kantorovich operators $S_{w}f$ to $f.$

\begin{proof}
Let $\lambda _{0}>0$ such that $I^{\varphi }\left[ \lambda _{0}f\right]
<+\infty $. Also, we fix $\lambda >0$ such that%
\begin{equation*}
\lambda <\min \left\{ 1,\frac{\lambda _{0}}{2}\right\} .
\end{equation*}%
In correspondence to $\lambda ,$ by condition (H) there exists $C_{\lambda }\in
\left( 0,1\right) $ such that $\varphi \left( C_{\lambda }\psi \left(
u\right) \right) \leq \eta \left( \lambda u\right) ,$ $u\in 
\mathbb{R}
_{0}^{+}.$ By condition $(\chi 4),$ there exists $M_{2}>0$ such that%
\begin{equation*}
\mathcal{T} _{w}\left( x\right) \leq M_{2}w^{-\theta _{0}},
\end{equation*}%
uniformly with respect to $x\in 
\mathbb{R}
,$ for sufficiently large $w>0.$ Let us choose $\mu >0$ such that 
\begin{equation*}
\mu \leq \min \left\{ \frac{C_{\lambda }}{3m_{0,\Pi }\left( L\right) },\frac{%
\lambda _{0}}{3M_{2}}\right\} .
\end{equation*}%
Taking into account that $\varphi $ is convex and non-decreasing, for $\mu
>0,$ we can write%
\begin{eqnarray*}
&&I^{\varphi }\left[ \mu \left( S_{w}f-f\right) \right] \\
&\leq &\frac{1}{3}\left\{ \int\limits_{%
\mathbb{R}
}\varphi \left( 3\mu \left\vert \left( S_{w}f\right) \left( x\right)
-\sum\limits_{k\in 
\mathbb{Z}
}\chi \left( wx-t_{k},\frac{w}{\Delta _{k}} \hskip-.3cm \int%
\limits_{t_{k}/w}^{t_{k+1}/w} \hskip-.3cm  f\left( u+x-t_{k}/w\right) du\right)
\right\vert \right) dx\right. \\
&&\left. +\int\limits_{%
\mathbb{R}
}\varphi \left( 3\mu \left\vert \sum\limits_{k\in 
\mathbb{Z}
}\chi \left( wx-t_{k},\frac{w}{\Delta _{k}} \hskip-.3cm  \int
\limits_{t_{k}/w}^{t_{k+1}/w} \hskip-.3cm  f\left( u+x-t_{k}/w\right) du\right)
-\sum\limits_{k\in 
\mathbb{Z}
}\chi \left( wx-t_{k},f\left( x\right) \right) \right\vert \right) dx\right.
\\
&&\left. +\int\limits_{%
\mathbb{R}
}\varphi \left( 3\mu \left\vert \sum\limits_{k\in 
\mathbb{Z}
}\chi \left( wx-t_{k},f\left( x\right) \right) -f\left( x\right) \right\vert
\right) dx\right\} =:I_{1}+I_{2}+I_{3.}
\end{eqnarray*}%
Now, we estimate $I_{1}.$ Applying condition $(\chi 3)$, we have%
\begin{equation*}
3I_{1}=\int\limits_{%
\mathbb{R}
}\varphi \left( 3\mu \left\vert \left( S_{w}f\right) \left( x\right)
-\sum\limits_{k\in 
\mathbb{Z}
}\chi \left( wx-t_{k},\frac{w}{\Delta _{k}} \hskip-.3cm  \int%
\limits_{t_{k}/w}^{t_{k+1}/w}\hskip-.3cm  f\left( u+x-t_{k}/w\right) du\right)
\right\vert \right) dx
\end{equation*}%
\begin{equation*}
\leq \int\limits_{%
\mathbb{R}
}\varphi \!\left( 3\mu \sum\limits_{k\in 
\mathbb{Z}
}\left\vert \chi\! \left( wx-t_{k},\frac{w}{\Delta _{k}} \hskip-.3cm \int%
\limits_{t_{k}/w}^{t_{k+1}/w}
\hskip-.3cm 
f\left( u\right) du\right)\! -\!\chi \left(\!
wx-t_{k},\frac{w}{\Delta _{k}}\hskip-.3cm  \int\limits_{t_{k}/w}^{t_{k+1}/w}
\hskip-.3cm 
f\left(
u+x-t_{k}/w\right) du\right) \right\vert \right)\! dx{{}}
\end{equation*}%
\begin{eqnarray*}
&\leq &\int\limits_{%
\mathbb{R}
}\varphi \left( 3\mu \sum\limits_{k\in 
\mathbb{Z}
}L\left( wx-t_{k}\right) \psi \left( \left\vert \frac{w}{\Delta _{k}}\hskip-.3cm  
\int\limits_{t_{k}/w}^{t_{k+1}/w}
\hskip-.3cm 
\left[ f\left( u\right) -f\left(
u+x-t_{k}/w\right) \right] du\right\vert \right) \right) dx \\
&\leq &\int\limits_{%
\mathbb{R}
}\varphi \left( 3\mu \sum\limits_{k\in 
\mathbb{Z}
}L\left( wx-t_{k}\right) \psi \left( \frac{w}{\Delta _{k}}\hskip-.3cm 
\int\limits_{t_{k}/w}^{t_{k+1}/w}
\hskip-.3cm 
\left\vert f\left( u\right) -f\left(
u+x-t_{k}/w\right) \right\vert du\right) \right) dx.
\end{eqnarray*}%
Applying Jensen inequality twice (see e.g., \cite{COSP11}), the change of variable $y=x-t_{k}/w,$
condition (H) and Fubini-Tonelli theorem, we obtain%
\vskip-1cm
\begin{eqnarray*}
&& 3I_{1} \leq \\ &\leq&\frac{1}{m_{0,\Pi }\left( L\right) }\int\limits_{%
\mathbb{R}
}\sum\limits_{k\in 
\mathbb{Z}
}L\left( wx-t_{k}\right) \varphi \left( 3\mu m_{0,\Pi }\left( L\right) \psi
\left( \frac{w}{\Delta _{k}}  \hskip-.3cm \int\limits_{t_{k}/w}^{t_{k+1}/w}  \hskip-.3cm \left\vert
f\left( u\right) -f\left( u+x-t_{k}/w\right) \right\vert du\right) \right) dx
\\
&\leq &\!\!\!\!\frac{1}{m_{0,\Pi }\left( L\right) }\sum\limits_{k\in 
\mathbb{Z}
}\int\limits_{%
\mathbb{R}
}L\left( wx-t_{k}\right) \varphi \left( 3\mu m_{0,\Pi }\left( L\right) \psi
\left( \frac{w}{\Delta _{k}} \hskip-.3cm \int\limits_{t_{k}/w}^{t_{k+1}/w}  \hskip-.3cm \left\vert
f\left( u\right) -f\left( u+x-t_{k}/w\right) \right\vert du\right) \right) dx
\\
&\leq &\!\!\!\frac{1}{m_{0,\Pi }\left( L\right) }\sum\limits_{k\in 
\mathbb{Z}
}\int\limits_{%
\mathbb{R}
}L\left( wy\right) \varphi \left( 3\mu m_{0,\Pi }\left( L\right) \psi \left( 
\frac{w}{\Delta _{k}}  \hskip-.3cm \int\limits_{t_{k}/w}^{t_{k+1}/w}
 \hskip-.3cm
\left\vert f\left(
u\right) -f\left( u+y\right) \right\vert du\right) \right) dy
\\
&\leq &\frac{1}{m_{0,\Pi }\left( L\right) }\sum\limits_{k\in 
\mathbb{Z}
}\int\limits_{%
\mathbb{R}
}L\left( wy\right) \varphi \left( C_{\lambda }\psi \left( \frac{w}{\Delta
_{k}}  \hskip-.3cm \int\limits_{t_{k}/w}^{t_{k+1}/w}  \hskip-.3cm \left\vert f\left( u\right) -f\left(
u+y\right) \right\vert du\right) \right) dy \\
&\leq &\frac{1}{m_{0,\Pi }\left( L\right) }\sum\limits_{k\in 
\mathbb{Z}
}\int\limits_{%
\mathbb{R}
}L\left( wy\right) \eta \left( \lambda \frac{w}{\Delta _{k}} 
 \hskip-.3cm \int\limits_{t_{k}/w}^{t_{k+1}/w}  \hskip-.3cm \left\vert f\left( u\right) -f\left(
u+y\right) \right\vert du\right) dy \\
&\leq &\frac{1}{m_{0,\Pi }\left( L\right) }\sum\limits_{k\in 
\mathbb{Z}
}\int\limits_{%
\mathbb{R}
}L\left( wy\right) \frac{w}{\Delta _{k}}  \hskip-.3cm \int\limits_{t_{k}/w}^{t_{k+1}/w}%
\eta \left( \lambda \left\vert f\left( u\right) -f\left( u+y\right)
\right\vert \right) dudy \\
&\leq &\frac{\delta ^{-1}}{m_{0,\Pi }\left( L\right) }\int\limits_{%
\mathbb{R}
}wL\left( wy\right) \sum\limits_{k\in 
\mathbb{Z}
}\int\limits_{t_{k}/w}^{t_{k+1}/w}\eta \left( \lambda \left\vert f\left(
u\right) -f\left( u+y\right) \right\vert \right) dudy \\
&\leq &\frac{\delta ^{-1}}{m_{0,\Pi }\left( L\right) }\int\limits_{%
\mathbb{R}
}wL\left( wy\right) \left[ \int\limits_{%
\mathbb{R}
}\eta \left( \lambda \left\vert f\left( u\right) -f\left( u+y\right)
\right\vert \right) du\right] dy \\
&\leq &\frac{\delta ^{-1}}{m_{0,\Pi }\left( L\right) }\int\limits_{%
\mathbb{R}
}wL\left( wy\right) I^{\eta }\left[ \lambda \left( f\left( \cdot \right)
-f\left( \cdot +y\right) \right) \right] dy=:J.
\end{eqnarray*}
Now, let $0<\alpha <1$ be fixed. We now split the above integral J as%
\begin{eqnarray*}
J :=\frac{w\delta ^{-1}}{m_{0,\Pi }\left( L\right) }\left\{
\int\limits_{\left\vert y\right\vert \leq 1/w^{\alpha
}}+\int\limits_{\left\vert y\right\vert >1/w^{\alpha }}\right\} L\left(
wy\right) I^{\eta }\left[ \lambda \left( f\left( \cdot \right) -f\left(
\cdot +y\right) \right) \right] dy 
=:J_{1}+J_{2}.
\end{eqnarray*}%
For $J_{1},$ one has%
\begin{eqnarray*}
J_{1} &=&\frac{w\delta ^{-1}}{m_{0,\Pi }\left( L\right) }\int\limits_{\left%
\vert y\right\vert \leq 1/w^{\alpha }}L\left( wy\right) I^{\eta }\left[
\lambda \left( f\left( \cdot \right) -f\left( \cdot +y\right) \right) \right]
dy \\
&\leq& \frac{w\delta ^{-1}}{m_{0,\Pi }\left( L\right) }\int\limits_{\left%
\vert y\right\vert \leq 1/w^{\alpha }}L\left( wy\right) \omega \left(
f,\left\vert y\right\vert \right) _{\eta }dy 
\leq \omega \left( f,1/w^{\alpha }\right) _{\eta }\frac{w\delta ^{-1}}{%
m_{0,\Pi }\left( L\right) }\int\limits_{\left\vert y\right\vert \leq
1/w^{\alpha }}L\left( wy\right) dy \\
&\leq &\omega \left( f,1/w^{\alpha }\right) _{\eta }\frac{\delta ^{-1}}{%
m_{0,\Pi }\left( L\right) }\left\Vert L\right\Vert _{1}.
\end{eqnarray*}%
On the other hand, taking into account that $I^{\eta }$ is convex (since $%
\eta $ is convex), for $J_{2}$ we can write%
\vskip-1cm
\begin{eqnarray*}
J_{2} &=&\frac{w\delta ^{-1}}{m_{0,\Pi }\left( L\right) }\int\limits_{\left%
\vert y\right\vert >1/w^{\alpha }}L\left( wy\right) I^{\eta }\left[ \lambda
\left( f\left( \cdot \right) -f\left( \cdot +y\right) \right) \right] dy \\
&\leq &\frac{w\delta ^{-1}}{m_{0,\Pi }\left( L\right) }\int\limits_{\left%
\vert y\right\vert >1/w^{\alpha }}L\left( wy\right) \frac{1}{2}\left\{
I^{\eta }\left[ 2\lambda f\left( \cdot \right) \right] +I^{\eta }\left[
2\lambda f\left( \cdot +y\right) \right] \right\} dy.
\end{eqnarray*}%
\vskip-0.11cm
Moreover, it can be easily seen that%
\begin{equation*}
I^{\eta }\left[ 2\lambda f\left( \cdot \right) \right] =I^{\eta }\left[
2\lambda f\left( \cdot +y\right) \right] ,
\end{equation*}%
for every $y.$ Therefore, by assumption (\ref{a1}), we have%
\begin{eqnarray*}
J_{2} &\leq &\frac{w\delta ^{-1}}{m_{0,\Pi }\left( L\right) }%
\int\limits_{\left\vert y\right\vert >1/w^{\alpha }}L\left( wy\right)
I^{\eta }\left[ 2\lambda f\right] dy
\leq \frac{\delta ^{-1}}{m_{0,\Pi }\left( L\right) }I^{\eta }\left[
\lambda _{0}f\right] M_{1}w^{-\alpha _{0}},\text{ as }w\rightarrow +\infty .
\end{eqnarray*}%
Now we estimate $I_{2}.$%
\begin{eqnarray*}
3I_{2} &=&\int\limits_{%
\mathbb{R}
}\varphi \left( 3\mu \left\vert \sum\limits_{k\in 
\mathbb{Z}
}\chi \left( wx-t_{k},\frac{w}{\Delta _{k}} \hskip-.3cm \int%
\limits_{t_{k}/w}^{t_{k+1}/w}
\hskip-.3cm 
f\left( u+x-t_{k}/w\right) du\right)
-\sum\limits_{k\in 
\mathbb{Z}
}\chi \left( wx-t_{k},f\left( x\right) \right) \right\vert \right) dx \\
&\leq &\int\limits_{%
\mathbb{R}
}\varphi \left( 3\mu \sum\limits_{k\in 
\mathbb{Z}
}\left\vert \chi \left( wx-t_{k},\frac{w}{\Delta _{k}}\hskip-.3cm  \int%
\limits_{t_{k}/w}^{t_{k+1}/w} \hskip-.3cm  f\left( u+x-t_{k}/w\right) du\right) -\chi
\left( wx-t_{k},f\left( x\right) \right) \right\vert \right) dx.
\end{eqnarray*}%
Using condition $(\chi 3)$, the change of variable $y=u-t_{k}/w,$
Jensen inequality and condition (H), we have
$$
3I_{2} \leq \int\limits_{%
\mathbb{R}
}\varphi \left( 3\mu \sum\limits_{k\in 
\mathbb{Z}
}L\left( wx-t_{k}\right) \psi \left( \left\vert \frac{w}{\Delta _{k}}%
\int\limits_{t_{k}/w}^{t_{k+1}/w}f\left( u+x-t_{k}/w\right) du-f\left(
x\right) \right\vert \right) \right) dx 
$$
\begin{eqnarray*}
&\leq &\int\limits_{%
\mathbb{R}
}\varphi \left( 3\mu \sum\limits_{k\in 
\mathbb{Z}
}L\left( wx-t_{k}\right) \psi \left( \left\vert \frac{w}{\Delta _{k}}%
\int\limits_{0}^{\Delta _{k}/w}\left[ f\left( x+y\right) -f\left( x\right) %
\right] dy\right\vert \right) \right) dx \\
&\leq &\int\limits_{%
\mathbb{R}
}\varphi \left( 3\mu \sum\limits_{k\in 
\mathbb{Z}
}L\left( wx-t_{k}\right) \psi \left( \frac{w}{\Delta _{k}}%
\int\limits_{0}^{\Delta _{k}/w}\left\vert f\left( x+y\right) -f\left(
x\right) \right\vert dy\right) \right) dx 
\\
&\leq &\frac{1}{m_{0,\Pi }\left( L\right) }\int\limits_{%
\mathbb{R}
}\sum\limits_{k\in 
\mathbb{Z}
}L\left( wx-t_{k}\right) \varphi \left( 3\mu m_{0,\Pi }\left( L\right) \psi
\left( \frac{w}{\Delta _{k}} \hskip-.3cm \int\limits_{0}^{\Delta _{k}/w} \hskip-.3cm \left\vert
f\left( x+y\right) -f\left( x\right) \right\vert dy\right) \right) dx \\
&\leq &\frac{1}{m_{0,\Pi }\left( L\right) }\int\limits_{%
\mathbb{R}
}\sum\limits_{k\in 
\mathbb{Z}
}L\left( wx-t_{k}\right) \varphi \left( C_{\lambda }\psi \left( \frac{w}{%
\Delta _{k}}\int\limits_{0}^{\Delta _{k}/w}\left\vert f\left( x+y\right)
-f\left( x\right) \right\vert dy\right) \right) dx \\
&\leq &\frac{1}{m_{0,\Pi }\left( L\right) }\int\limits_{%
\mathbb{R}
}\sum\limits_{k\in 
\mathbb{Z}
}L\left( wx-t_{k}\right) \eta \left( \lambda \frac{w}{\Delta _{k}}%
\int\limits_{0}^{\Delta _{k}/w}\left\vert f\left( x+y\right) -f\left(
x\right) \right\vert dy\right) dx.
\end{eqnarray*}%
Using Jensen inequality and Fubini-Tonelli theorem, we get%
\begin{eqnarray*}
3I_{2} &\leq& \frac{1}{m_{0,\Pi }\left( L\right) }\int\limits_{%
\mathbb{R}
}\sum\limits_{k\in 
\mathbb{Z}
}L\left( wx-t_{k}\right) \left[ \frac{w}{\Delta _{k}}\int\limits_{0}^{%
\Delta _{k}/w}\eta \left( \lambda \left\vert f\left( x+y\right) -f\left(
x\right) \right\vert \right) dy\right] dx 
\\
&\leq& \frac{\delta ^{-1}}{m_{0,\Pi }\left( L\right) }\int\limits_{%
\mathbb{R}
}w\sum\limits_{k\in 
\mathbb{Z}
}L\left( wx-t_{k}\right) \left[ \int\limits_{0}^{\Delta /w}\eta \left(
\lambda \left\vert f\left( x+y\right) -f\left( x\right) \right\vert \right)
dy\right] dx 
\\
&\leq& \frac{\delta ^{-1}}{m_{0,\Pi }\left( L\right) }\int\limits_{%
\mathbb{R}
}wm_{0,\Pi }\left( L\right) \left[ \int\limits_{0}^{\Delta /w}\eta \left(
\lambda \left\vert f\left( x+y\right) -f\left( x\right) \right\vert \right)
dy\right] dx 
\\
&=&\delta ^{-1}w\int\limits_{0}^{\Delta /w}\left[ \int\limits_{%
\mathbb{R}
}\eta \left( \lambda \left\vert f\left( x+y\right) -f\left( x\right)
\right\vert \right) dx\right] dy\\
&=& \delta ^{-1}w\int\limits_{0}^{\Delta /w}I^{\eta }\left[ \lambda \left(
f\left( \cdot +y\right) -f\left( \cdot \right) \right) \right] dy 
\leq \delta ^{-1}\omega \left( f,\Delta /w\right) _{\eta
}w\int\limits_{0}^{\Delta /w}dy \\
&=& \delta ^{-1}\Delta \omega \left( f,\Delta
/w\right) _{\eta }.
\end{eqnarray*}
For $I_{3},$ denoted by $A_{0}\subseteq 
\mathbb{R}
$ the set of all points of $
\mathbb{R}
$ for which $f\neq 0$ almost everywhere, we obtain%
\begin{eqnarray*}
3I_{3} &=&\int\limits_{A_{0}}\varphi \left( 3\mu \left\vert
\sum\limits_{k\in \mathbb{Z}}
\chi \left( wx-t_{k},f\left( x\right) \right) -f\left( x\right) \right\vert
\right) dx \\
&=&\int\limits_{A_{0}}\varphi \left( 3\mu \left\vert f\left( x\right)
\right\vert \left\vert \frac{1}{f\left( x\right) }\sum\limits_{k\in 
\mathbb{Z}
}\chi \left( wx-t_{k},f\left( x\right) \right) -1\right\vert \right) dx \\
&\leq &\int\limits_{A_{0}}\varphi \left( 3\mu \left\vert f\left( x\right)
\right\vert \mathcal{T}_{w}\left( x\right) \right) dx.
\end{eqnarray*}%
By the convexity of $\varphi $ and condition $(\chi 4)$, we have%
\begin{eqnarray*}
3I_{3} &\leq &\int\limits_{A_{0}}\varphi \left( 3\mu M_{2}w^{-\theta
_{0}}\left\vert f\left( x\right) \right\vert \right) dx 
\leq w^{-\theta _{0}}\int\limits_{A_{0}}\varphi \left( 3M_{2}\mu
\left\vert f\left( x\right) \right\vert \right) dx \\
&\leq &w^{-\theta _{0}}\int\limits_{%
\mathbb{R}
}\varphi \left( 3M_{2}\mu \left\vert f\left( x\right) \right\vert \right) dx
\leq w^{-\theta _{0}}I^{\varphi }\left[ 3M_{2}\mu f\right] \leq w^{-\theta
_{0}}I^{\varphi }\left[ \lambda _{0}f\right] <+\infty,
\end{eqnarray*}%
for positive constants$\ M_{2},$ $\theta _{0}$. This completes the proof.
\end{proof}
\end{theorem}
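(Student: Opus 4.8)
The plan is to control $S_wf-f$ by inserting two intermediate quantities and splitting into three pieces. Writing
\[
G_w(x):=\sum_{k\in\mathbb{Z}}\chi\!\left(wx-t_k,\tfrac{w}{\Delta_k}\!\int_{t_k/w}^{t_{k+1}/w}\!f(u+x-t_k/w)\,du\right),\qquad
H_w(x):=\sum_{k\in\mathbb{Z}}\chi(wx-t_k,f(x)),
\]
I would decompose $S_wf-f=(S_wf-G_w)+(G_w-H_w)+(H_w-f)$. Since $\varphi$ is convex and non-decreasing, $\varphi(\mu|a+b+c|)\le\frac13\{\varphi(3\mu|a|)+\varphi(3\mu|b|)+\varphi(3\mu|c|)\}$, so after integrating I obtain $I^{\varphi}[\mu(S_wf-f)]\le I_1+I_2+I_3$, where $3I_1,3I_2,3I_3$ are the three integrals built from the summands above. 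Before estimating, I fix $\lambda_0>0$ with $I^{\varphi}[\lambda_0 f]<\infty$ (possible since $f\in L^{\varphi+\eta}\subset L^{\varphi}$), choose $\lambda<\min\{1,\lambda_0/2\}$, take the associated $C_\lambda\in(0,1)$ from (H) and $M_2>0$ from $(\chi4)$, and then pick $\mu$ small enough that $3\mu\,m_{0,\Pi}(L)\le C_\lambda$ and $3\mu M_2\le\lambda_0$; these two inequalities are exactly what make the later applications of (H) and of convexity in $I_3$ go through.

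For $I_1$ the key mechanism is a double use of Jensen's inequality. First I apply $(\chi3)$ to bound each summand by $L(wx-t_k)\psi(|\cdots|)$ and pass the modulus inside the inner average. Then, viewing the outer sum as an $L$-weighted average and normalising by $m_{0,\Pi}(L)$ (finite by Lemma \ref{Lem1}), Jensen moves $\varphi$ inside the sum; the choice $3\mu\,m_{0,\Pi}(L)\le C_\lambda$ lets me replace $\varphi(C_\lambda\psi(\cdot))$ by $\eta(\lambda\,\cdot)$ via (H). A second Jensen pushes $\eta$ inside the inner average $\frac{w}{\Delta_k}\int_{t_k/w}^{t_{k+1}/w}$, after which the substitution $y=x-t_k/w$ and Fubini--Tonelli collapse the $k$-sum against $\sum_k\int_{t_k/w}^{t_{k+1}/w}=\int_{\mathbb R}$, using $\Delta_k\ge\delta$, to leave $3I_1\le J:=\frac{\delta^{-1}}{m_{0,\Pi}(L)}\int_{\mathbb R}wL(wy)\,I^{\eta}[\lambda(f(\cdot)-f(\cdot+y))]\,dy$. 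I then split $J=J_1+J_2$ at $|y|=1/w^\alpha$: on $J_1$ I bound $I^{\eta}[\lambda(f(\cdot)-f(\cdot+y))]\le\omega(f,|y|)_\eta\le\omega(f,1/w^\alpha)_\eta$ and use $\int_{\mathbb R}wL(wy)\,dy=\|L\|_1$; on $J_2$ I invoke convexity of $I^{\eta}$ together with the translation invariance $I^{\eta}[2\lambda f(\cdot+y)]=I^{\eta}[2\lambda f(\cdot)]$ to bound the modular by $I^{\eta}[2\lambda f]\le I^{\eta}[\lambda_0 f]$, and then hypothesis \eqref{a1} supplies the decay $w\int_{|y|>1/w^\alpha}L(wy)\,dy\le M_1 w^{-\alpha_0}$.

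The term $I_2$ is handled by the same two-Jensen scheme, but here the change of variable $y=u-t_k/w$ first rewrites the inner average as $\frac{w}{\Delta_k}\int_0^{\Delta_k/w}[f(x+y)-f(x)]\,dy$; after applying $(\chi3)$, Jensen, (H), a second Jensen and Fubini--Tonelli, the $L$-sum is controlled by $\sum_k L(wx-t_k)\le m_{0,\Pi}(L)$ and $\Delta_k/w\le\Delta/w$, yielding $3I_2\le\delta^{-1}w\int_0^{\Delta/w}I^{\eta}[\lambda(f(\cdot+y)-f(\cdot))]\,dy\le\delta^{-1}\Delta\,\omega(f,\Delta/w)_\eta$. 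For $I_3$, on the set where $f\neq0$ I factor $\sum_k\chi(wx-t_k,f(x))-f(x)=f(x)\big(\tfrac1{f(x)}\sum_k\chi(wx-t_k,f(x))-1\big)$ and recognise the bracket as dominated by $\mathcal T_w(x)\le M_2 w^{-\theta_0}$; convexity of $\varphi$ with the factor $w^{-\theta_0}\le1$ pulls out the decay and the choice $3\mu M_2\le\lambda_0$ gives $3I_3\le w^{-\theta_0}I^{\varphi}[\lambda_0 f]$. Summing $I_1+I_2+I_3$ reproduces the four stated terms. Finally, letting $w\to\infty$, the two modulus terms tend to $0$ because $1/w^\alpha,\Delta/w\to0$ and the modulus of continuity is continuous at the origin in $L^{\eta}$, while the two power terms vanish, which yields the claimed modular convergence. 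I expect the main obstacle to be the careful calibration of $\mu$ against $C_\lambda$, $m_{0,\Pi}(L)$ and $M_2$ so that (H) and the convexity steps apply simultaneously, and the correct bookkeeping of the two nested Jensen inequalities.
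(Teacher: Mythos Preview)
Your proposal is correct and follows essentially the same approach as the paper: the identical three-term decomposition via $G_w$ and $H_w$, the same calibration of $\mu$ against $C_\lambda/(3m_{0,\Pi}(L))$ and $\lambda_0/(3M_2)$, the same double-Jensen mechanism combined with $(\chi3)$ and (H) for $I_1$ and $I_2$, the same split of $J$ at $|y|=1/w^{\alpha}$ with the convexity/translation-invariance bound on $J_2$, and the same $\mathcal{T}_w$-factorisation for $I_3$. The only (harmless) point to tighten is that $\lambda_0$ should be chosen so that \emph{both} $I^{\varphi}[\lambda_0 f]$ and $I^{\eta}[\lambda_0 f]$ are finite, which is exactly what $f\in L^{\varphi+\eta}(\mathbb{R})$ provides and what you implicitly use in bounding $J_2$.
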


Note that, condition (\ref{a1}) is satisfied when, for instance, the kernel $%
\chi $ satisfies condition $(\chi 3)$ with $L$ having compact support,
e.g. $supp$ $L\subset \left[ -B,B\right] ,$ $B>0.$ Indeed,%
\begin{equation*}
w\int \limits_{\left \vert y\right \vert >1/w^{\alpha }}L\left( wy\right)
dy=w\int \limits_{\left \vert u\right \vert >w^{1-\alpha }}L\left( u\right)
du=0,
\end{equation*}%
for sufficiently large $w>B^{1/\left( 1-\alpha \right) }.$ Moreover, in this case, condition $(L2)$ is satisfied for every $%
\beta _{0}>0.$ Then, we get the following.

\begin{corollary}
\label{Cr}Let $\chi $ be a kernel satisfying condition $(\chi 3)$ with $L$
having compact support. Moreover, let $\varphi $ be a convex $\varphi -$%
function satisfying condition (H) with $\eta $ convex and $f\in L^{\varphi
+\eta }$\bigskip $\left( 
\mathbb{R}
\right) .$ Then, 
for every 
$0<\alpha <1,$ there exist
constant $\mu >0$ and $\lambda_0 >0$ such that%
\begin{equation*}
I^{\varphi }\left[ \mu \left( S_{w}f-f\right) \right] \leq \frac{\left\Vert
L\right\Vert _{1}}{3\delta m_{0,\Pi }\left( L\right) }\omega \left(
f,1/w^{\alpha }\right) _{\eta }+\frac{\Delta }{3\delta }\omega \left(
f,\Delta /w\right) _{\eta }+\frac{I^{\varphi }\left[ \lambda _{0}f\right] }{3%
}w^{-\theta _{0}},
\end{equation*}%
for every sufficiently large $w>0,$ where $m_{0,\Pi }\left( L\right)
<+\infty $ in view of Lemma \ref{Lem1} and $\theta _{0}>0$ is the constant
of condition $(\chi 4).$
\end{corollary}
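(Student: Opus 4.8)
The plan is to run the proof of Theorem~\ref{Th1} essentially unchanged, observing that the compact support of $L$ forces the ``tail'' contribution to vanish identically for large $w$, so that the term carrying the factor $M_1 w^{-\alpha_0}$ disappears from the final estimate. Accordingly, I would first fix $0<\alpha<1$ and check that all hypotheses of Theorem~\ref{Th1} are in force. Since $\mathrm{supp}\,L\subset[-B,B]$, the absolute moment $m_{\beta_0,\Pi}(L)$ is finite for every $\beta_0>0$ (as already noted before the statement), so $(L2)$ holds; together with $(L1)$, Lemma~\ref{Lem1} then yields $m_{0,\Pi}(L)<+\infty$. Moreover the growth condition \eqref{a1} holds in the strongest possible form: by the computation recorded before the corollary,
\begin{equation*}
w\int_{|y|>1/w^{\alpha}} L(wy)\,dy = w\int_{|u|>w^{1-\alpha}} L(u)\,du = 0
\end{equation*}
as soon as $w^{1-\alpha}>B$, i.e. $w>B^{1/(1-\alpha)}$. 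Thus the constants $\mu>0$ and $\lambda_0>0$ furnished by Theorem~\ref{Th1} are exactly the ones we use.

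Next I would reproduce the decomposition $I^{\varphi}[\mu(S_w f-f)]\le I_1+I_2+I_3$ with the same choice of $\lambda$, $\lambda_0$ and $\mu$. The estimates of $I_2$ and $I_3$ do not involve the tail region and are copied verbatim, producing the terms $\frac{\Delta}{3\delta}\,\omega(f,\Delta/w)_{\eta}$ and $\frac{I^{\varphi}[\lambda_0 f]}{3}\,w^{-\theta_0}$. For $I_1$ I would carry out the same chain of inequalities (two applications of Jensen, the change of variable $y=x-t_k/w$, condition (H), and Fubini--Tonelli) up to the bound $3I_1\le J$, and then split $J=J_1+J_2$ exactly as in the theorem. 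The estimate of $J_1$ is unchanged and contributes $\frac{\|L\|_1}{3\delta m_{0,\Pi}(L)}\,\omega(f,1/w^{\alpha})_{\eta}$.

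The only new point — and it is entirely routine — is the treatment of $J_2$. Because $L(wy)=0$ whenever $|wy|>B$, i.e. $|y|>B/w$, and because $0<\alpha<1$ guarantees $1/w^{\alpha}>B/w$ for $w>B^{1/(1-\alpha)}$, the integration region $\{|y|>1/w^{\alpha}\}$ is contained in $\{|y|>B/w\}$, on which the integrand vanishes. Hence $J_2=0$ for all sufficiently large $w$, and the term $\frac{M_1 I^{\eta}[\lambda_0 f]}{3\delta m_{0,\Pi}(L)}\,w^{-\alpha_0}$ simply drops out, leaving the asserted three-term estimate. There is no genuine obstacle here; the one thing to keep an eye on is the threshold $w>B^{1/(1-\alpha)}$, which is precisely where the hypothesis $\alpha<1$ is used, ensuring that the shrinking support of $L(w\,\cdot)$ sits inside the ball of radius $1/w^{\alpha}$.
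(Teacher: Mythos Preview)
Your proposal is correct and follows exactly the approach the paper takes: the discussion immediately preceding the corollary observes that compact support of $L$ makes the tail integral in \eqref{a1} identically zero for $w>B^{1/(1-\alpha)}$, so the $M_1 w^{-\alpha_0}$ term vanishes and the estimate of Theorem~\ref{Th1} reduces to the stated three terms.
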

\vskip0.4cm

Note that, if $L$ has not compact support, we may require the following sufficient condition:%
\begin{equation}
\label{a5}
M_\nu(L)\ :=\ \int_{\mathbb{R}}L(u)\, |u|^\nu\, du\ <\ +\infty, \quad \quad \nu>0,
\end{equation}
which imply assumption (\ref{a1}). In this case, for every $0<\alpha <1,$
we get
$$
w\int\limits_{\left\vert y\right\vert >1/w^{\alpha }}L\left( wy\right)
dy\ =\ \int_{|u|>w^{1-\alpha}} L(u)\, du\ = \int_{|u|>w^{1-\alpha}} \frac{|u|^\nu}{|u|^\nu}L(u)\, du
$$
$$
\leq\ {1 \over w^{(1-\alpha)\, \nu}}\int_{|u|>w^{1-\alpha}} |u|^\nu L(u)\, du\ \leq\ {M_\nu(L) \over w^{(1-\alpha)\, \nu}} \ =\ \mathcal{O}\left( w^{(\alpha -1)\, \nu}\right), 
$$
for sufficiently large $w>0$. Hence, (\ref{a1}) is satisfied with $\alpha_0=(1-\alpha)\, \nu$ and $M_1=M_\nu(L)$.

  We now recall the definition of Lipschitz classes in Orlicz spaces. We define by $Lip_{\varphi}(\nu)$, $0<\nu \leq 1$, the set of all functions $f \in M(\mathbb{R})$ such that, there exists $\lambda>0$ with:
\begin{equation*}
I^{\varphi}[\lambda(f(\cdot)-f(\cdot+t))] = \int_{\mathbb{R}}\varphi\left(\lambda\left|f\left(x\right)-f\left(x + t\right)\right|\right) dx = \mathcal{O}(|t|^{\nu}),
\end{equation*}
as $t\to 0$. In this context, from Theorem \ref{Th1} we immediately get the following corollary. 
\begin{corollary} \label{cor-Lip-varphi}
Under the assumptions of Theorem \ref{Th1} with $0<\alpha < 1$, and for any $f \in Lip_{\eta}(\nu)$, $0<\nu \leq 1$, there exist $K>0$, $\mu>0$ such that
$$
I^{\varphi}[\mu(S_wf - f)] \leq K\, w^{-\ell},
$$
for  sufficiently large $w>0$, where $\ell:=\min\left\{ \alpha \nu,\, \alpha_0,\, \theta_0  \right\}$.
\end{corollary}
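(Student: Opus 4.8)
The plan is to apply Theorem~\ref{Th1} verbatim and then to absorb the two modulus-of-continuity terms on its right-hand side into powers of $w$ by means of the Lipschitz hypothesis. Recall that the theorem supplies, for suitable $\mu>0$ and $\lambda_0>0$,
\begin{equation*}
I^{\varphi}[\mu(S_wf-f)] \leq \frac{\|L\|_1}{3\delta\, m_{0,\Pi}(L)}\,\omega(f,1/w^\alpha)_\eta + \frac{M_1\, I^\eta[\lambda_0 f]}{3\delta\, m_{0,\Pi}(L)}\,w^{-\alpha_0} + \frac{\Delta}{3\delta}\,\omega(f,\Delta/w)_\eta + \frac{I^\varphi[\lambda_0 f]}{3}\,w^{-\theta_0},
\end{equation*}
for all sufficiently large $w$. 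Since $f\in L^{\varphi+\eta}(\mathbb{R})$, the modulars $I^\eta[\lambda_0 f]$ and $I^\varphi[\lambda_0 f]$ are finite, so the second and fourth summands are already of orders $\mathcal{O}(w^{-\alpha_0})$ and $\mathcal{O}(w^{-\theta_0})$; only the first and third summands, involving $\omega(\cdot)_\eta$, remain to be estimated.

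Next I would invoke $f\in Lip_\eta(\nu)$. By definition there is a constant $\lambda_1>0$ with $I^\eta[\lambda_1(f(\cdot)-f(\cdot+t))]=\mathcal{O}(|t|^\nu)$ as $t\to0$, whence $\omega(f,\delta)_\eta\leq C\,\delta^\nu$ for some $C>0$ and all small enough $\delta$. As $w\to+\infty$ the arguments $1/w^\alpha$ and $\Delta/w$ tend to $0$, so for $w$ large they lie in this range and
\begin{equation*}
\omega(f,1/w^\alpha)_\eta\leq C\,w^{-\alpha\nu}, \qquad \omega(f,\Delta/w)_\eta\leq C\,\Delta^\nu\,w^{-\nu}.
\end{equation*}
Plugging these into the displayed bound, the right-hand side is dominated by a sum of four terms of orders $w^{-\alpha\nu}$, $w^{-\alpha_0}$, $w^{-\nu}$ and $w^{-\theta_0}$.

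It remains to compare exponents. Since $0<\alpha<1$ and $0<\nu\leq1$ we have $\alpha\nu\leq\nu$, hence $w^{-\nu}\leq w^{-\alpha\nu}$ for $w>1$ and the third term is subsumed by the first; the slowest-decaying among the surviving rates $w^{-\alpha\nu}$, $w^{-\alpha_0}$, $w^{-\theta_0}$ is $w^{-\ell}$ with $\ell=\min\{\alpha\nu,\alpha_0,\theta_0\}$. Choosing $K$ to be the sum of the (fixed) multiplicative constants then gives $I^\varphi[\mu(S_wf-f)]\leq K\,w^{-\ell}$ for all sufficiently large $w$, which is the assertion.

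The only point requiring care is the compatibility of the constant $\lambda$ implicit in the $\eta$-modulus of Theorem~\ref{Th1} with the constant $\lambda_1$ coming from the definition of $Lip_\eta(\nu)$: because the modular $I^\eta$ is not positively homogeneous, one cannot simply rescale. This is handled by the monotonicity of $\eta$ together with the freedom, already exploited in the proof of Theorem~\ref{Th1}, to take $\lambda$ (and accordingly $\mu$) arbitrarily small; choosing $\lambda\leq\lambda_1$ gives $I^\eta[\lambda(f(\cdot)-f(\cdot+t))]\leq I^\eta[\lambda_1(f(\cdot)-f(\cdot+t))]=\mathcal{O}(|t|^\nu)$, so the passage from the Lipschitz bound to the modulus estimate is legitimate.
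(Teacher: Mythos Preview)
Your proposal is correct and follows exactly the approach the paper intends: the paper offers no explicit proof, merely stating that the corollary follows ``immediately'' from Theorem~\ref{Th1}, and what you have written is precisely that immediate deduction. Your final paragraph, which addresses the compatibility of the parameter $\lambda$ hidden in $\omega(\cdot)_\eta$ with the Lipschitz parameter $\lambda_1$, is a genuine subtlety that the paper glosses over; your resolution via the freedom to take $\lambda\le\lambda_1$ (and correspondingly shrink $\mu$) in the proof of Theorem~\ref{Th1} is the right one.
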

\hskip0.4cm

Now, we consider below some particular cases of Orlicz spaces.

\noindent Let $\varphi \left( u\right) =u^{p},$ $u\in 
\mathbb{R}
_{0}^{+},$ $p\geq 1.$ Then, the Orlicz space $L^{\varphi }\left( 
\mathbb{R}
\right) $ coincides with the space $L^{p}\left( 
\mathbb{R}
\right) .$ If 
$\psi \left( u\right) =u^{q/p},$ $1 \leq q \le p$, condition (H) it turns out to be satisfied with $\eta \left( u\right)=u^q$ and $C_{\lambda }=\lambda^{q/p}$. In this case, we have $L^{\varphi + \eta}(\mathbb{R})= L^p(\mathbb{R}) \cap L^q(\mathbb{R})$ (which is, in fact, a proper subspace of $L^p(\mathbb{R})$), and obviously Theorem \ref{Th1} and its corollary hold.

  From the theory developed in \cite{Vinti}, we know that if the function $\psi$ of condition $(\chi 3)$ is $\psi(u)=u$, $u \in \mathbb{R}$, and $\varphi(u)=u^p$, $1 \leq p <\infty$, the operators $S_w$ maps the whole space $L^p(\mathbb{R})$ into itself and therefore, we can obtain, as particular case, a quantitative estimate in $L^p(\mathbb{R})$. But since the $\varphi$-modulus of continuity does not satisfy the well-known property $\omega(f, \lambda \delta) \leq (1 + \lambda)\omega(f,\delta)$, satisfied e.g., by the $\omega_p$-modulus of smoothness below defined, we can proceed using a direct approach and estimating the term $S_w f - f$ with respect to the p-norm. For the above purpose, we need to recall the definition of the $L^{p}-$modulus of smoothness of order one
given as%
\begin{equation*}
\omega _{p}\left( f,\delta \right) :=\sup_{\left \vert h\right \vert \leq
\delta }\left \Vert f\left( \cdot +h\right) -f\left( \cdot \right) \right
\Vert _{p}=\sup_{\left \vert h\right \vert \leq \delta }\left( \int
\limits_{%
\mathbb{R}
}\left \vert f\left( t+h\right) -f\left( t\right) \right \vert ^{p}dt\right)
^{1/p},
\end{equation*}%
with $\delta >0,$ $f\in L^{p}\left( 
\mathbb{R}
\right) ,$ $1\leq p<\infty .$

We can prove the following estimate.
\begin{theorem} \label{theorem-lp-2}
Suppose that $M_p(L)<+\infty$, $1\leq p<\infty$.
Then, for every $f\in L^{p}\left(\mathbb{R}\right),$ the following quantitative estimate holds
\begin{eqnarray*}
\left \Vert S_{w}f-f\right \Vert _{p} &\leq& \delta ^{-1/p} [2m_{0,\Pi }\left(
	L\right)]^{\left( p-1\right) /p}\left[ \left \Vert
L\right \Vert _{1}+M_{p}\left( L\right) \right] ^{1/p}\omega _{p}\left(f,1/w\right)
\\
&+& \delta ^{-1/p}m_{0,\Pi }\left( L\right) \Delta ^{1/p}\omega _{p}\left(f,\Delta /w\right) +M_{2}\left \Vert f\right \Vert _{p}w^{-\theta _{0}},
\end{eqnarray*}
for sufficiently large $w>0,$ where $m_{0,\Pi }\left( L\right)<+\infty $ in view of Lemma \ref{Lem1} and $M_{2},$ $\theta _{0}>0$ are the
constants of condition $(\chi 4)$.
\end{theorem}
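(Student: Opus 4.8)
The plan is to reproduce the three-term splitting used in the proof of Theorem \ref{Th1}, but to work directly with the $L^p$-norm. Here $\psi(u)=u$, so condition $(\chi 3)$ reads $\left|\chi(x,u)-\chi(x,v)\right|\leq L(x)\left|u-v\right|$, and I would exploit the semi-additivity $\omega_p(f,\lambda\delta)\leq(1+\lambda)\,\omega_p(f,\delta)$, a property the $\varphi$-modulus does not enjoy. Setting
\[
g_1(x):=\sum_{k\in\mathbb{Z}}\chi\Bigl(wx-t_k,\ \tfrac{w}{\Delta_k}\int_{t_k/w}^{t_{k+1}/w} f(u+x-t_k/w)\,du\Bigr),\qquad g_2(x):=\sum_{k\in\mathbb{Z}}\chi\left(wx-t_k,f(x)\right),
\]
I would estimate $\|S_wf-f\|_p\leq\|S_wf-g_1\|_p+\|g_1-g_2\|_p+\|g_2-f\|_p$ by the triangle inequality and bound the three summands in turn, using that $S_w$ maps $L^p(\mathbb{R})$ into itself (from \cite{Vinti}).

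For the first two terms the crucial device is to apply Jensen's inequality \emph{twice} so as to preserve the $p$-th power structure: first in discrete form $\bigl(\sum_k a_k b_k\bigr)^p\leq\bigl(\sum_k a_k\bigr)^{p-1}\sum_k a_k b_k^p$ with weights $a_k=L(wx-t_k)$, whose sum is $\leq m_{0,\Pi}(L)$ by Lemma \ref{Lem1}, then in integral form on each averaging interval $[t_k/w,t_{k+1}/w]$. For $\|S_wf-g_1\|_p$, after invoking $(\chi 3)$, the double Jensen, the shift $y=x-t_k/w$, Fubini--Tonelli, the bound $\tfrac{w}{\Delta_k}\leq\tfrac{w}{\delta}$ and the partition identity $\sum_k\int_{t_k/w}^{t_{k+1}/w}=\int_{\mathbb{R}}$, I expect to reach, after the rescaling $v=wy$,
\[
\|S_wf-g_1\|_p^p\ \leq\ \frac{m_{0,\Pi}(L)^{p-1}}{\delta}\int_{\mathbb{R}} L(v)\,\|f(\cdot+v/w)-f(\cdot)\|_p^p\,dv.
\]
The modulus is then controlled by $\|f(\cdot+v/w)-f(\cdot)\|_p\leq(1+|v|)\,\omega_p(f,1/w)$, and the elementary convexity bound $(1+|v|)^p\leq 2^{p-1}(1+|v|^p)$ turns $\int_{\mathbb{R}}L(v)(1+|v|)^p\,dv$ into $2^{p-1}[\|L\|_1+M_p(L)]$; taking $p$-th roots produces exactly the first summand. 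The term $\|g_1-g_2\|_p$ is handled in the same spirit: the substitution $y=u-t_k/w$ turns the inner average into $\tfrac{w}{\Delta_k}\int_0^{\Delta_k/w}[f(x+y)-f(x)]\,dy$, and the double Jensen, $\tfrac{w}{\Delta_k}\leq\tfrac{w}{\delta}$, Fubini--Tonelli and $\omega_p(f,y)\leq\omega_p(f,\Delta/w)$ for $0\leq y\leq\Delta/w$ give $\|g_1-g_2\|_p^p\leq m_{0,\Pi}(L)^p\,\tfrac{\Delta}{\delta}\,\omega_p(f,\Delta/w)^p$, i.e. the second summand.

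The third term is immediate: off the set $A_0$ where $f\neq0$ one has $g_2=f=0$ by $(\chi 2)$, while on $A_0$ factoring out $f(x)$ gives $|g_2(x)-f(x)|=|f(x)|\,\bigl|\tfrac{1}{f(x)}\sum_k\chi(wx-t_k,f(x))-1\bigr|\leq|f(x)|\,\mathcal{T}_w(x)\leq M_2 w^{-\theta_0}|f(x)|$ by $(\chi 4)$, whence $\|g_2-f\|_p\leq M_2 w^{-\theta_0}\|f\|_p$. Summing the three contributions yields the claimed estimate. I expect the main obstacle to be the disciplined bookkeeping in the first two terms: one must resist collapsing the infinite sum by the triangle/Minkowski inequality (which for the second term would cost a factor $\delta^{-1}\Delta$ rather than the sharp $\delta^{-1/p}\Delta^{1/p}$), and instead keep the $p$-th powers intact through the discrete Jensen step, so that the semi-additivity of $\omega_p$ together with the convexity inequality produces the precise constant $[2m_{0,\Pi}(L)]^{(p-1)/p}[\|L\|_1+M_p(L)]^{1/p}$ rather than a coarser one.
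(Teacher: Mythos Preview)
Your proposal is correct and matches the paper's own proof essentially step by step: the same three-term splitting via $g_1$ and $g_2$, the double application of Jensen's inequality (discrete, then over the averaging interval), the same changes of variable, the use of $\omega_p(f,\lambda\delta)\leq(1+\lambda)\omega_p(f,\delta)$ together with $(1+|v|)^p\leq 2^{p-1}(1+|v|^p)$ to produce the constant $[2m_{0,\Pi}(L)]^{(p-1)/p}[\|L\|_1+M_p(L)]^{1/p}$, and the treatment of the third term on $A_0$ via $(\chi 4)$. Your remark about avoiding a premature Minkowski on the sum in $I_2$ (to get $\delta^{-1/p}\Delta^{1/p}$ rather than $\delta^{-1}\Delta$) is exactly the point the paper handles by keeping $p$-th powers intact.
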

\begin{proof}
Recalling that $I^\varphi[f]=\|f\|_p^p$, when $\varphi(u)=u^p$, proceeding as in the first part of the proof of Theorem \ref{Th1}, using the Minkowsky inequality, and that the function $|\cdot|^{1/p}$ is concave and hence sub-additive, we immediately obtain:
\begin{eqnarray*}
\left\Vert S_{w}f-f\right\Vert _{p} &\leq& \left( \int\limits_{%
\mathbb{R}
}\left[ \sum\limits_{k\in 
\mathbb{Z}
}L\left( wx-t_{k}\right) \frac{w}{\Delta _{k}}\int%
\limits_{t_{k}/w}^{t_{k+1}/w}\left\vert f\left( u\right) -f\left(
u+x-t_{k}/w\right) \right\vert du\right] ^{p}dx\right) ^{1/p} 
\\
&+& \left( \int\limits_{%
\mathbb{R}
}\left[ \sum\limits_{k\in 
\mathbb{Z}
}L\left( wx-t_{k}\right) \frac{w}{\Delta _{k}}\int%
\limits_{t_{k}/w}^{t_{k+1}/w}\left\vert f\left( u+x-t_{k}/w\right) -f\left(
x\right) \right\vert du\right] ^{p}dx\right) ^{1/p} 
\\
&+& \left( \int\limits_{%
\mathbb{R}
}\left\vert \sum\limits_{k\in 
\mathbb{Z}
}\chi \left( wx-t_{k},f\left( x\right) \right) -f\left( x\right) \right\vert
^{p}dx\right) ^{1/p}=:I_{1}+I_{2}+I_{3.}
\end{eqnarray*}
We estimate $I_{1}.$ Applying Jensen inequality twice, Fubini-Tonelli theorem, and by the change of
variable $y=x-t_{k}/w$, we get%
\begin{eqnarray*}
I_{1}^{p} &=&\int\limits_{%
\mathbb{R}
}\left[ \sum\limits_{k\in 
\mathbb{Z}
}L\left( wx-t_{k}\right) \frac{w}{\Delta _{k}}\int%
\limits_{t_{k}/w}^{t_{k+1}/w}\left\vert f\left( u\right) -f\left(
u+x-t_{k}/w\right) \right\vert du\right] ^{p}dx \\
&\leq &\frac{1}{m_{0,\Pi }\left( L\right) }\int\limits_{%
\mathbb{R}
}\sum\limits_{k\in 
\mathbb{Z}
}L\left( wx-t_{k}\right) \left[ \frac{w}{\Delta _{k}}\int%
\limits_{t_{k}/w}^{t_{k+1}/w}m_{0,\Pi }\left( L\right) \left\vert f\left(
u\right) -f\left( u+x-t_{k}/w\right) \right\vert du\right] ^{p}dx \\
&\leq &m_{0,\Pi }\left( L\right) ^{p-1}\sum\limits_{k\in 
\mathbb{Z}
} \int\limits_{%
\mathbb{R}
}L\left( wx-t_{k}\right) \left[ \frac{w}{\Delta _{k}}\int%
\limits_{t_{k}/w}^{t_{k+1}/w}\left\vert f\left( u\right) -f\left(
u+x-t_{k}/w\right) \right\vert ^{p}du\right] dx \\
&\leq &m_{0,\Pi }\left( L\right) ^{p-1}\sum\limits_{k\in 
\mathbb{Z}
} \int\limits_{%
\mathbb{R}
}L\left( wy\right) \left[ \frac{w}{\Delta _{k}}\int%
\limits_{t_{k}/w}^{t_{k+1}/w}\left\vert f\left( u\right) -f\left( u+y\right)
\right\vert ^{p}du\right] dy \\
&\leq &\delta ^{-1}m_{0,\Pi }\left( L\right) ^{p-1}\int\limits_{%
\mathbb{R}
}wL\left( wy\right) \left[ \sum\limits_{k\in 
\mathbb{Z}
}\int\limits_{t_{k}/w}^{t_{k+1}/w}\left\vert f\left( u\right) -f\left(
u+y\right) \right\vert ^{p}du\right] dy \\
&=&\delta ^{-1}m_{0,\Pi }\left( L\right) ^{p-1}\int\limits_{%
\mathbb{R}
}wL\left( wy\right) \left[ \int\limits_{%
\mathbb{R}
}\left\vert f\left( u\right) -f\left( u+y\right) \right\vert ^{p}du\right] dy
\\
&\leq &\delta ^{-1}m_{0,\Pi }\left( L\right) ^{p-1}\int\limits_{%
\mathbb{R}
}wL\left( wy\right) \omega _{p}\left( f,\left\vert y\right\vert \right)
^{p}dy \\
&\leq &\delta ^{-1}m_{0,\Pi }\left( L\right) ^{p-1}\int\limits_{%
\mathbb{R}
}wL\left( wy\right) \left( 1+w\left\vert y\right\vert \right) ^{p}\omega
_{p}\left( f,\frac{1}{w}\right) ^{p}dy 
\end{eqnarray*}
\begin{eqnarray*}
&\leq &\delta ^{-1}m_{0,\Pi }\left( L\right) ^{p-1}2^{p-1}\omega _{p}\left(
f,\frac{1}{w}\right) ^{p}\int\limits_{%
\mathbb{R}
}wL\left( wy\right) \left( 1+\left( w\left\vert y\right\vert \right)
^{p}\right) dy \\
&=&\delta ^{-1}m_{0,\Pi }\left( L\right) ^{p-1}2^{p-1}\omega _{p}\left( f,%
\frac{1}{w}\right) ^{p}\int\limits_{%
\mathbb{R}
}L\left( z\right) \left( 1+\left\vert z\right\vert ^{p}\right) dz \\
&=&\delta ^{-1}m_{0,\Pi }\left( L\right) ^{p-1}2^{p-1}\omega _{p}\left( f,%
\frac{1}{w}\right) ^{p}\left( \left\Vert L\right\Vert _{1}+M_{p}\left(
L\right) \right) ,
\end{eqnarray*}%
for every $w>0,$ where $\left\Vert L\right\Vert _{1}$ and $M_{p}\left(
L\right) $ are both finite. 
Note that, in the above estimates we used the following well-known property of the modulus of smoothness
$$
\omega_p(f, \lambda \delta)\ \leq\ (1 + \lambda)\, \omega_p(f, \delta), \quad \quad \lambda,\ \delta>0.
$$
Now we estimate $I_{2}.$ Using Jensen inequality twice, the change of
variable $y=u-t_{k}/w$ and Fubini-Tonelli theorem, we have%
\begin{eqnarray*}
I_{2}^{p} &=&\int\limits_{%
\mathbb{R}
}\left[ \sum\limits_{k\in 
\mathbb{Z}
}L\left( wx-t_{k}\right) \frac{w}{\Delta _{k}}\int%
\limits_{t_{k}/w}^{t_{k+1}/w}\left\vert f\left( u+x-t_{k}/w\right) -f\left(
x\right) \right\vert du\right] ^{p}dx \\
&\leq &\int\limits_{%
\mathbb{R}
}\left[ \sum\limits_{k\in 
\mathbb{Z}
}L\left( wx-t_{k}\right) \frac{w}{\Delta _{k}}\int\limits_{0}^{\Delta
_{k}/w}\left\vert f\left( x+y\right) -f\left( x\right) \right\vert dy\right]
^{p}dx \\
&\leq &\frac{1}{m_{0,\Pi }\left( L\right) }\int\limits_{%
\mathbb{R}
}\sum\limits_{k\in 
\mathbb{Z}
}L\left( wx-t_{k}\right) \left[ \frac{w}{\Delta _{k}}\int\limits_{0}^{%
\Delta _{k}/w}m_{0,\Pi }\left( L\right) \left\vert f\left( x+y\right)
-f\left( x\right) \right\vert dy\right] ^{p}dx \\
&\leq &\delta ^{-1}m_{0,\Pi }\left( L\right) ^{p-1}\int\limits_{%
\mathbb{R}
}\sum\limits_{k\in 
\mathbb{Z}
}L\left( wx-t_{k}\right) \left[ w\int\limits_{0}^{\Delta /w}\left\vert
f\left( x+y\right) -f\left( x\right) \right\vert ^{p}dy\right] dx \\
&\leq &\delta ^{-1}m_{0,\Pi }\left( L\right) ^{p}\int\limits_{%
\mathbb{R}
}w\int\limits_{0}^{\Delta /w}\left\vert f\left( x+y\right) -f\left(
x\right) \right\vert ^{p}dydx \\
&\leq &\delta ^{-1}m_{0,\Pi }\left( L\right) ^{p}\int\limits_{0}^{\Delta
/w}w\left[ \int\limits_{%
\mathbb{R}
}\left\vert f\left( x+y\right) -f\left( x\right) \right\vert ^{p}dx\right] dy
\\
&\leq &\delta ^{-1}m_{0,\Pi }\left( L\right) ^{p}\int\limits_{0}^{\Delta
/w}w\left[ \omega _{p}\left( f,\Delta /w\right) \right] ^{p}dy 
= \delta ^{-1}m_{0,\Pi }\left( L\right) ^{p}\Delta \left[ \omega _{p}\left(
f,\Delta /w\right) \right] ^{p}.
\end{eqnarray*}%
For $I_{3},$ denoted by $A_{0}\subseteq 
\mathbb{R}
$ the set of all points of $%
\mathbb{R}
$ for which $f\neq 0$ almost everywhere, we obtain%
\begin{eqnarray*}
I_{3}^{p} &=&\int\limits_{A_{0}}\left\vert \sum\limits_{k\in 
\mathbb{Z}
}\chi \left( wx-t_{k},f\left( x\right) \right) -f\left( x\right) \right\vert
^{p}dx \\
&=&\int\limits_{A_{0}}\left\vert f\left( x\right) \right\vert
^{p}\left\vert \frac{1}{f\left( x\right) }\sum\limits_{k\in 
\mathbb{Z}
}\chi \left( wx-t_{k},f\left( x\right) \right) -1\right\vert ^{p}dx 
\leq \int\limits_{A_{0}}\left\vert f\left( x\right) \right\vert ^{p}\left[
\mathcal{T}_{w}\left( x\right) \right] ^{p}dx.
\end{eqnarray*}%
From condition $(\chi 4)$, we have%
\begin{eqnarray*}
I_{3}^{p} &\leq &\int\limits_{A_{0}}\left\vert f\left( x\right) \right\vert
^{p}M_{2}^{p}w^{-p\theta _{0}}dx 
\leq M_{2}^{p}w^{-p\theta _{0}}\int\limits_{%
\mathbb{R}
}\left\vert f\left( x\right) \right\vert ^{p}dx \\
&=&M_{2}^{p}w^{-p\theta _{0}}\left\Vert f\right\Vert _{p}^{p}
\end{eqnarray*}%
for positive constants$\ M_{2},$ $\theta _{0}$. This proves the theorem.
\end{proof}

Now, denoting by $Lip\left( \alpha ,p\right)$, $0<\alpha \leq 1$, $p\geq 1$, the corresponding Lipschitz classes in $L^p(\mathbb{R})$, we can immediately state the following.

\begin{corollary} \label{cor3.3}
Suppose that $M_p(L)<+\infty$, for $1 \leq p<\infty$.
Then, for every $f\in Lip\left( \alpha ,p\right) ,$ with $0<\alpha \leq 1$, we have%
\begin{eqnarray*}
\left\Vert S_{w}f-f\right\Vert _{p} &\leq &\delta ^{-1/p}[2m_{0,\Pi }\left(
L\right)]^{\left( p-1\right) /p}\left[  \|L\|_1+ M_{p}\left( L\right) \right]^{1/p}C_{1} \frac{%
1}{w^{\alpha }} \\
&&+\delta ^{-1/p}m_{0,\Pi }\left( L\right) C_{1}\Delta ^{1/p}\left( \frac{%
\Delta }{w}\right) ^{\alpha }+M_{2}\left\Vert f\right\Vert _{p}w^{-\theta
_{0}},
\end{eqnarray*}%
for every sufficiently large $w>0,$ where $m_{0,\Pi }\left( L\right) $ is finite in view of Lemma \ref{Lem1} and, $C_{1}>0$, $M_{2},$ $\theta _{0}>0$ are the constants arising from the fact that $f \in Lip(\alpha, p)$ and from
condition $(\chi 4),$ respectively$.$
\end{corollary}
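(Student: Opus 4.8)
The plan is to obtain the stated inequality as a direct consequence of Theorem \ref{theorem-lp-2}, using only the Lipschitz hypothesis to control the two occurrences of the $L^{p}$-modulus of smoothness. Recall first that $f\in Lip\left(\alpha,p\right)$ means precisely that $\omega_{p}(f,\delta)=\mathcal{O}(\delta^{\alpha})$ as $\delta\to 0^{+}$; that is, there is a constant $C_{1}>0$ with $\omega_{p}(f,\delta)\leq C_{1}\delta^{\alpha}$ for all sufficiently small $\delta>0$. In particular $Lip\left(\alpha,p\right)\subset L^{p}\left(\mathbb{R}\right)$, so that, together with the standing assumption $M_{p}(L)<+\infty$, all the hypotheses of Theorem \ref{theorem-lp-2} are satisfied.

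The next step is simply to invoke Theorem \ref{theorem-lp-2}, which gives, for sufficiently large $w>0$,
\begin{equation*}
\left\Vert S_{w}f-f\right\Vert _{p}\leq \delta ^{-1/p}[2m_{0,\Pi }(L)]^{(p-1)/p}\left[\left\Vert L\right\Vert _{1}+M_{p}(L)\right]^{1/p}\omega _{p}(f,1/w)+\delta ^{-1/p}m_{0,\Pi }(L)\Delta ^{1/p}\omega _{p}(f,\Delta /w)+M_{2}\left\Vert f\right\Vert _{p}w^{-\theta _{0}}.
\end{equation*}
For $w$ large enough, both arguments $1/w$ and $\Delta /w$ are small, so the Lipschitz estimate yields $\omega _{p}(f,1/w)\leq C_{1}w^{-\alpha }$ and $\omega _{p}(f,\Delta /w)\leq C_{1}(\Delta /w)^{\alpha }$. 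Substituting these two bounds into the displayed inequality reproduces verbatim the three terms in the statement of the corollary, the constant $C_{1}$ being carried through unchanged.

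I expect no genuine obstacle here: the corollary is a routine specialization of Theorem \ref{theorem-lp-2}. The only point that requires a word of care is that the Lipschitz bound $\omega_{p}(f,\delta)\leq C_{1}\delta^{\alpha}$ is only asserted for $\delta$ small, which is exactly why the conclusion is stated for $w$ sufficiently large; the third term $M_{2}\left\Vert f\right\Vert_{p}w^{-\theta_{0}}$ passes through untouched since $f\in L^{p}\left(\mathbb{R}\right)$ guarantees $\left\Vert f\right\Vert_{p}<+\infty$.
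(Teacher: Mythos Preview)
Your proposal is correct and matches the paper's approach: the corollary is stated there as an immediate consequence of Theorem~\ref{theorem-lp-2}, with no separate proof given, obtained precisely by substituting the Lipschitz estimate $\omega_{p}(f,\delta)\leq C_{1}\delta^{\alpha}$ for the two occurrences of the modulus of smoothness.
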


\begin{remark} \rm
\label{Rk}Note that if the kernel $\chi $ is of the form $\chi \left(
x,u\right) =L\left( x\right) u,$ with $L$ satisfying conditions $\left(
L1\right) $ and $\left( L2\right) ,$ the above operators reduces to the linear
case considered in \cite{Bardaro2}. In this situation, condition $\left( \chi
4\right) $ becomes%
\begin{equation}
\mathcal{T} _{w}\left( x\right) =\left\vert \sum\limits_{k\in 
\mathbb{Z}
}L\left( wx-t_{k}\right) -1\right\vert =\mathcal{O}\left( w^{-\theta _{0}}\right),%
\text{ as }w\rightarrow +\infty ,  \label{a6}
\end{equation}%
uniformly with respect to $x\in 
\mathbb{R}
,$ for some $\theta _{0}>0.$ Sometimes, a stronger condition
(instead of (\ref{a6})) is required, i.e., that%
\begin{equation}
\sum\limits_{k\in 
\mathbb{Z}
}L\left( u-t_{k}\right) =1,  \label{a7}
\end{equation}%
for every $u\in 
\mathbb{R}
.$ In this case, condition $\left( \chi 4\right) $ holds for every $\theta
_{0}>0.$ When $t_{k}=k$ (uniform sampling scheme) and $L$ is continuous, it is well known that (%
\ref{a7}) is equivalent to%
\begin{equation*}
\text{\ \ }\widehat{L}\left( 2 \pi k\right) :=
\left\{
\begin{array}{cc}
0, & \text{ \ \ }k\in 
\mathbb{Z}
\backslash \left\{ 0\right\} ,\\
1, & \text{\ }\ \ \ \ \ \ \ \ \text{\ }k=0,
\end{array}\right.
\end{equation*}%
where $\widehat{L}\left( \upsilon \right) :=\int\limits_{%
\mathbb{R}
}L\left( u\right) e^{-i\upsilon u}du,$ $\upsilon \in 
\mathbb{R}
,$ denotes the Fourier transform of $L$ (see \cite{Bardaro2, Butzer1, C-V5}).
\end{remark}

The rate of approximation for (linear) sampling Kantorovich operators in various
settings was studied in \cite{C-V5}. Also, a quantitative estimate for these
operators was obtained in \cite{C-V4} by using the modulus of continuity in
Orlicz spaces. 

  The general setting of Orlicz spaces allows us to directly
deduce the results concerning some quantitative estimates of approximation
in $L^{p}-$spaces (as in Corollary \ref{cor3.3}), together with some other useful spaces, as for examples Zygmund spaces and the exponential spaces, defined in Section \ref{sec2}.

In the case of approximation by linear sampling Kantorovich operators, considered in Remark \ref{Rk}, and below denoted by
$S_{w}^{\ast }$, we can immediately deduce, as a particular case, the following results.

\begin{corollary} \label{cor-linear}
Suppose that $M_p(L)<+\infty$, for $1 \leq p<\infty$.
Then, for every $f\in L^{p}\left( 
\mathbb{R}
\right) ,$ there holds%
\begin{eqnarray*}
\left \Vert S_{w}^{\ast }f-f\right \Vert _{p} &\leq &\delta ^{-1/p}[2m_{0,\Pi }\left(
	L\right)]^{\left( p-1\right) /p}\left[
\left \Vert L \right \Vert _{1}+M_{p}\left( L \right) \right]
^{1/p}\omega _{p}\left( f,1/w\right) \\
&&+\delta ^{-1/p}m_{0,\Pi }\left( L \right) \Delta ^{1/p}\omega
_{p}\left( f,\Delta /w\right) ,
\end{eqnarray*}%
for sufficiently large $w>0,$ where $m_{0,\Pi }\left( L \right)
<+\infty$.
Moreover, if $f\in Lip\left( \alpha ,p\right) ,$ with $0<\alpha \leq 1$, we have%
\begin{eqnarray*}
\left\Vert S_{w}^{\ast }f-f\right\Vert _{p} &\leq &\delta ^{-1/p}[2m_{0,\Pi }\left(
	L\right)]^{\left( p-1\right) /p}\left[
\left \Vert L \right \Vert _{1}+M_{p}\left( L \right) \right]
^{1/p} C_{1} \frac{1}{w^{\alpha }} \\
&&+\delta ^{-1/p}m_{0,\Pi }\left( L \right) C_{1}\Delta ^{1/p}\left( 
\frac{\Delta }{w}\right) ^{\alpha },
\end{eqnarray*}%
for sufficiently large $w>0,$ 
where $C_{1}>0$ is
the constant arising from the fact that $f$ belongs to $Lip(\alpha, p)$.
\end{corollary}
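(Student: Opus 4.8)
The plan is to read off Corollary~\ref{cor-linear} as the specialization of Theorem~\ref{theorem-lp-2} (and of Corollary~\ref{cor3.3}) to the linear kernel $\chi(x,u)=L(x)\,u$ of Remark~\ref{Rk}. For this kernel one has $|\chi(x,u)-\chi(x,v)|=L(x)\,|u-v|$, so condition $(\chi 3)$ holds with $\psi(u)=u$; this is exactly the setting $\psi(u)=u$, $\varphi(u)=u^p$ in which Theorem~\ref{theorem-lp-2} is stated, and the sole standing hypothesis $M_p(L)<+\infty$ is assumed. First I would reproduce the opening of that proof, writing $\|S_w^\ast f-f\|_p\le I_1+I_2+I_3$ through the Minkowski inequality and subadditivity of $|\cdot|^{1/p}$. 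The bounds for $I_1$ and $I_2$ are inherited verbatim, since they use only $\psi(u)=u$, two applications of Jensen's inequality, Fubini--Tonelli, the change of variable $y=x-t_k/w$, and the elementary property $\omega_p(f,\lambda\delta)\le(1+\lambda)\,\omega_p(f,\delta)$; they furnish precisely the two $\omega_p$-terms in the claimed estimate.

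The decisive step is the term $I_3$. In the linear case $\chi(wx-t_k,f(x))=L(wx-t_k)\,f(x)$, whence
$$
\sum_{k\in\mathbb{Z}}\chi(wx-t_k,f(x))-f(x)=f(x)\Big(\sum_{k\in\mathbb{Z}}L(wx-t_k)-1\Big).
$$
Imposing the stronger normalization $(\ref{a7})$ recalled in Remark~\ref{Rk}, namely $\sum_{k}L(u-t_k)=1$ for every $u\in\mathbb{R}$, the parenthesis vanishes identically, so $I_3\equiv 0$. This kills the remainder term $M_2\|f\|_p\,w^{-\theta_0}$ that survives in Theorem~\ref{theorem-lp-2}, and it is exactly this cancellation that produces the clean first estimate and the announced improvement over what Theorem~3.1 of \cite{C-V4} would yield.

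For the second assertion I would feed the Lipschitz hypothesis into the first estimate: $f\in Lip(\alpha,p)$ means $\omega_p(f,\delta)\le C_1\,\delta^\alpha$ for small $\delta>0$, so taking $\delta=1/w$ and $\delta=\Delta/w$ gives $\omega_p(f,1/w)\le C_1\,w^{-\alpha}$ and $\omega_p(f,\Delta/w)\le C_1\,(\Delta/w)^\alpha$; substituting these yields the stated bound. Equivalently, the two parts are the $I_3=0$ reductions of Theorem~\ref{theorem-lp-2} and Corollary~\ref{cor3.3}. There is no genuine analytic obstacle, the computation being entirely inherited; the only point demanding care is the bookkeeping of hypotheses, since one must invoke $(\ref{a7})$ rather than merely $(\ref{a6})$ to force $I_3\equiv0$ and thereby suppress the $w^{-\theta_0}$ summand that would otherwise persist.
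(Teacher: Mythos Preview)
Your proposal is correct and follows exactly the paper's approach: the paper gives no separate proof of Corollary~\ref{cor-linear}, merely stating that it is ``immediately deduced, as a particular case'' of Theorem~\ref{theorem-lp-2} and Corollary~\ref{cor3.3} applied to the linear kernel $\chi(x,u)=L(x)u$ of Remark~\ref{Rk}. Your identification of the key point---that the $w^{-\theta_0}$ term disappears precisely because one must impose the stronger normalization~(\ref{a7}) rather than merely~(\ref{a6}), so that $I_3\equiv 0$---is accurate and in fact makes explicit a hypothesis the paper leaves implicit in the corollary statement.
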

Note that, the estimates established in Corollary \ref{cor-linear} are sharper than those achieved in the general case of Theorem \ref{cor-Lip-varphi}.


\section{Examples of kernels} \label{sec4}

In this section, we give some concrete examples of the above nonlinear
sampling Kantorovich operators describing a natural procedure to
construct kernels. We will consider kernel functions of the form%
\begin{equation*}
\chi \left( wx-t_{k},u\right) =L\left( wx-t_{k}\right) g_{w}\left( u\right) ,
\end{equation*}%
where $\left( g_{w}\right) _{w>0}$ is a family of functions $g_{w}:%
\mathbb{R}
\rightarrow 
\mathbb{R}
$ satisfying $g_{w}\left( u\right) \rightarrow u$ uniformly on $%
\mathbb{R}
$ as $w\rightarrow +\infty $ and such that there exists a $\varphi -$%
function $\psi $ with%
\begin{equation} 
\left\vert g_{w}\left( u\right) -g_{w}\left( \upsilon \right) \right\vert
\leq \psi \left( \left\vert u-\upsilon \right\vert \right) ,  \label{n1}
\end{equation}%
for every $u,$ $\upsilon \in 
\mathbb{R}
$ and $w>0.$ Hence, assumptions $\left( \chi i\right) ,$ $i=1,..,4$ and $%
\left( Lj\right) ,$ $j=1,2$ can be summarized as follows.

$\left( \mathcal{L} 1\right) $ $k\rightarrow L\left( wx-t_{k}\right) \in
\ell ^{1}\left( 
\mathbb{Z}
\right) $, for every $x\in 
\mathbb{R}
$ and $w>0,$ $L$ is locally bounded in a neighborhood of the origin and
there exists $\beta _{0}>0$ such that%
\begin{equation}
m_{\beta _{0},\Pi }\left( L\right) :=\sup_{x\in 
\mathbb{R}
}\sum \limits_{k\in 
\mathbb{Z}
}L\left( wx-t_{k}\right) \left \vert wx-t_{k}\right \vert ^{\beta
_{0}}<+\infty ;  \label{n2}
\end{equation}

$\left( \mathcal{L} 2\right) $ $g_{w}\left( 0\right) =0,$ for every $w>0;$

$\left( \mathcal{L} 3\right) $ there exists $\theta _{0}>0$ such that%
\begin{equation*}
\mathcal{T} _{w}\left( x\right) :=\sup_{u\neq 0}\left \vert \frac{g_{w}\left(
u\right) }{u}\sum \limits_{k\in 
\mathbb{Z}
}L\left( wx-t_{k}\right) -1\right \vert =\mathcal{O}\left( w^{-\theta _{0}}\right),
\end{equation*}%
as $w\rightarrow +\infty ,$ uniformly with respect to $x\in 
\mathbb{R}
.$

Firstly, we show an example of sequence $\left( g_{w}\right) _{w>0}$
satisfying all the assumptions of the above theory.

\begin{example} \rm
Let us define $g_{w}\left( u\right) =u^{1-1/w}$ for every $u\in \left(
a,1\right) ,$ with $0<a<1/e$ and $g_{w}\left( u\right) =u$ otherwise, for $%
w>0.$ It is easily seen that $g_{w}\left( u\right) \rightarrow u$ uniformly
on $%
\mathbb{R}
,$ as $w\rightarrow +\infty .$ Note that if the function $L$ satisfies
condition (\ref{a7}), assumption $\left( \mathcal{L} 3\right) $ holds for $%
\theta _{0}=1.$ In fact, the function $g_{w}\left( u\right)
-u $ on $\left( a,1\right) $ achieves the maximum at $u_{0}:=\left( \frac{w-1%
}{w}\right) ^{w}$ for sufficiently large $w>0$ ($g_{w}\left( u\right) -u=0$
otherwise), then we have for every $u\in 
\mathbb{R}
$%
\begin{equation*}
\left\vert g_{w}\left( u\right) -u\right\vert \leq \left\vert g_{w}\left(
u_{0}\right) -u_{0}\right\vert =\left( \frac{w-1}{w}\right) ^{w}\left( \frac{%
1}{w-1}\right) \leq \frac{C}{w-1},
\end{equation*}%
for sufficiently large $w>0$ and for a suitable positive constant $C.$ Then%
\begin{equation*}
\sup_{u\neq 0}\left\vert \frac{g_{w}\left( u\right) }{u}-1\right\vert
=\sup_{u\in \left( a,1\right) }\frac{1}{\left\vert u\right\vert }\left\vert
g_{w}\left( u\right) -u\right\vert \leq a^{-1}\frac{C}{w-1}=\mathcal{O}\left( w^{-1}\right),
\end{equation*}%
as $w\rightarrow +\infty .$ Moreover, $g_w(u)$ satisfies (\ref{n1}) for sufficiently large $w>0$ and $\psi$ concave.
\end{example}

In addition, if we consider the particular case $g_{w}\left( u\right) =u,$ $%
u\in 
\mathbb{R}
,$ $w>0,$ the function $\psi $ corresponding to $\chi \left( x,u\right)
=L\left( x\right) u$ is $\psi \left( u\right) =u,$ $u\in 
\mathbb{R}
_{0}^{+}.$ In this case, our operators reduce to linear ones studied in
details in \cite{Bardaro2, C-V5, ANCOVI1} and as stated in Remark \ref{Rk},\
condition $\left( \mathcal{L} 3\right) $ becomes%
\begin{equation*}
\mathcal{T} _{w}\left( x\right) =\left\vert \sum\limits_{k\in 
\mathbb{Z}
}L\left( wx-k\right) -1\right\vert =\mathcal{O}\left( w^{-\theta _{0}}\right), \text{
as }w\rightarrow +\infty ,
\end{equation*}%
uniformly with respect to $x\in 
\mathbb{R}
,$ which is fulfilled for every $\theta _{0}>0$, if $\sum_{k \in \mathbb{Z}}L(u-k)=1$, for every $u \in \mathbb{R}$.

In order to construct a first example of function $L,$ we consider the
well-known Fej\'{e}r kernel, of the form 
\begin{equation*}
F\left( x\right) :=\frac{1}{2}\text{sinc}^{2}\left( \frac{x}{2}\right) ,%
\text{ }x\in 
\mathbb{R}
,
\end{equation*}%
where the sinc-function is the following%
\begin{equation*}
\text{sinc}(x) :=
\left\{
\begin{array}{cc}
\dfrac{\sin \left( \pi x\right) }{\pi x}, & \text{ \ \ }x\in 
\mathbb{R}
\backslash \left\{ 0\right\} ,\\
1, & \text{\ \ \ \ \ \ }x=0.
\end{array}\right.
\end{equation*}%

It can be easily seen that the function $F$ is non-negative and bounded, \ belongs to $%
L^{1}\left( 
\mathbb{R}
\right) $, with $\|F\|_1=m_{0,\Pi}(F) =1$ and satisfies the moment condition in (\ref{n2}) for every $%
0<\beta _{0}< 1$ (see, e.g., \cite{C-V2, C-V5, CACOVI3})$.$ In addition, it is easy to see that $M_\nu(F)<+\infty$, for any $0<\nu <1$, hence, as stated before, 
condition (\ref{a1}) is satisfied with $\alpha_0=(1-\alpha)\, \nu$, $0<\alpha<1$, and $M_1=M_\nu(F)$.
Furthermore, the Fourier transform of $F$ is given by%

\begin{equation*}
\widehat{F}\left( \upsilon \right) :=
\left\{
\begin{array}{cc}
1-\left\vert\upsilon /\pi \right\vert, & \text{ \ \ }\left\vert \upsilon \right\vert \leq\pi ,\\
0, & \text{\ \ \ \  }\left\vert \upsilon
\right\vert >\pi ,
\end{array}\right.
\end{equation*}%
(see \cite{Butzer1}). Then, by Remark \ref{Rk}, we have $\sum\limits_{k\in 
\mathbb{Z}
}F\left( u-k\right) =1$ for every $u\in 
\mathbb{R}
.$ Consequently, condition $\left( \mathcal{L} 3\right) $ reduces to $%
\sup\limits_{u\neq 0}\left\vert g_{w}\left( u\right) /u-1\right\vert
= \mathcal{O}\left( w^{-\theta _{0}}\right),$ as $w\rightarrow +\infty ,$ for some $%
\theta _{0}>0$, which is satisfied since $g_w(u)-u$ converges uniformly to zero, as $w\to+\infty$.

Then, considering e.g. a uniform sampling scheme, i.e., $t_{k}=k,$ $k\in 
\mathbb{Z}
,$ the corresponding nonlinear sampling Kantorovich operators based on Fej%
\'{e}r kernel are%
\begin{equation*}
\left( S_{w}^{F}f\right) \left( x\right) =\frac{1}{2}\sum\limits_{k\in 
\mathbb{Z}
}\text{sinc}^{2}\left( \frac{wx-k}{2}\right) g_{w}\left(
w\int\limits_{k/w}^{\left( k+1\right) /w}f\left( u\right) du\right) ,\text{
\ \ \ \ \ \ \ }x\in 
\mathbb{R}
,
\end{equation*}%
for every $w>0,$ where $f:%
\mathbb{R}
\rightarrow 
\mathbb{R}
$ is a locally integrable function such that the above series is convergent
for every $x\in 
\mathbb{R}
.$ For $S_{w}^{F}f,$ from Theorem \ref{Th1}, we obtain the following.

\begin{corollary}
Let $\varphi $ be a convex $\varphi -$function$.$ Suppose that $\varphi $
satisfies condition (H) with $\eta $ convex and $f\in L^{\varphi +\eta }$%
\bigskip $\left( 
\mathbb{R}
\right) $. Then, for every $0<\alpha <1$, there exist constant $\mu >0$, $\lambda _{0}>0$, such that 
$$
\hskip-8.5cm I^{\varphi }\left[ \mu \left( S_{w}^{F}f-f\right) \right] \leq 
$$
$$
\frac{%
1}{3}\left\{\omega \left(
f,1/w^{\alpha }\right) _{\eta }+ \widetilde{K}I^{\eta }\left[ \lambda
_{0}f\right]w^{-\alpha_0} +\omega \left( f,1/w\right) _{\eta }+ I^{\varphi }\left[
\lambda _{0}f\right]w^{-\theta _{0}} \right\},
$$
for sufficiently large $w>0,$ where $F$ is the Fej\'{e}r kernel, $%
\widetilde{K}>0$ is a suitable constant, and $\theta _{0}>0$ is the parameter of assumption $(\mathcal{L} 3)$.
\end{corollary}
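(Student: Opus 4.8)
The plan is to specialize Theorem~\ref{Th1} to the Fej\'{e}r kernel $F$ and read off the constants appearing in the general estimate. The operator $S_w^F$ arises from the kernel $\chi(wx-t_k,u)=F(wx-t_k)\,g_w(u)$, so I first verify that $F$ together with the family $(g_w)_{w>0}$ satisfies all hypotheses of Theorem~\ref{Th1}. From the discussion immediately preceding the statement, $F$ is non-negative, bounded, lies in $L^1(\mathbb{R})$, and satisfies the moment condition $(L2)$ for every $0<\beta_0<1$; crucially $\|F\|_1=m_{0,\Pi}(F)=1$. I would also invoke $M_\nu(F)<+\infty$ for $0<\nu<1$, which guarantees that the growth condition (\ref{a1}) holds with $\alpha_0=(1-\alpha)\nu$ and $M_1=M_\nu(F)$. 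Finally, since $\sum_{k\in\mathbb{Z}}F(u-k)=1$ for every $u$ (via the Fourier transform computation and Remark~\ref{Rk}), condition $(\chi 4)$, equivalently $(\mathcal{L}3)$, reduces to $\sup_{u\neq 0}|g_w(u)/u-1|=\mathcal{O}(w^{-\theta_0})$, which is fulfilled because $g_w(u)-u\to 0$ uniformly.

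With the hypotheses checked, I would plug the specific values into the conclusion of Theorem~\ref{Th1}. The general estimate reads
$$
I^{\varphi}[\mu(S_wf-f)]\leq \frac{\|L\|_1}{3\delta\, m_{0,\Pi}(L)}\,\omega(f,1/w^\alpha)_\eta+\frac{M_1\,I^\eta[\lambda_0 f]}{3\delta\, m_{0,\Pi}(L)}\,w^{-\alpha_0}+\frac{\Delta}{3\delta}\,\omega(f,\Delta/w)_\eta+\frac{I^\varphi[\lambda_0 f]}{3}\,w^{-\theta_0}.
$$
For the uniform scheme $t_k=k$ one has $\Delta_k=t_{k+1}-t_k=1$ for every $k$, so $\delta=\Delta=1$; moreover $\|F\|_1=m_{0,\Pi}(F)=1$. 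Substituting these simplifies the first coefficient $\|L\|_1/(3\delta\, m_{0,\Pi}(L))$ to $1/3$, turns the third term $\Delta/(3\delta)=1/3$ multiplying $\omega(f,\Delta/w)_\eta=\omega(f,1/w)_\eta$, and reduces the second coefficient to $M_\nu(F)/3=:\widetilde{K}/3$ multiplying $I^\eta[\lambda_0 f]\,w^{-\alpha_0}$. Collecting the four terms over the common factor $1/3$ yields exactly the stated bound with $\widetilde{K}=M_\nu(F)$.

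There is no genuine analytic obstacle here, since every inequality is inherited verbatim from Theorem~\ref{Th1}; the work is purely in correctly identifying the constants of this special case. The one point requiring a little care is the bookkeeping: one must confirm that the uniform mesh forces $\delta=\Delta=1$ so that the $\delta^{-1}$ and $\Delta$ factors cancel against $\|F\|_1=m_{0,\Pi}(F)=1$, and that the value of $\theta_0$ in the final estimate is precisely the exponent from $(\mathcal{L}3)$ governing the convergence rate of $g_w(u)/u$ to $1$, rather than an independently chosen parameter. Once these identifications are made explicit, the corollary follows immediately by direct substitution, and I would phrase the proof simply as an application of Theorem~\ref{Th1} with the constants specialized as above.
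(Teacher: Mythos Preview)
Your proposal is correct and follows exactly the approach of the paper, which simply derives the corollary ``from Theorem~\ref{Th1}'' by specializing to the Fej\'er kernel with the uniform sampling scheme. Your identification of the constants $\delta=\Delta=1$, $\|F\|_1=m_{0,\Pi}(F)=1$, and $\widetilde{K}=M_\nu(F)$ (with $\alpha_0=(1-\alpha)\nu$) is precisely what is needed, and the paper offers no additional argument beyond this direct substitution.
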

Analogously, we may obtain a similar version of Corollary \ref{cor-Lip-varphi} for the operators $S^F_w f$.

The Fej\'{e}r kernel has unbounded support. Thus, to reconstruct a given
signal of $f$ by means of $S_{w}^{F}f,$ we need to compute an infinite
number of mean values $w\int\limits_{k/w}^{\left( k+1\right) /w}f\left(
u\right) du$ in order to evaluate the above operators at any fixed point $%
x\in 
\mathbb{R}
.$ Therefore, for a practical application of the above sampling series with $%
L$ having unbounded support, the sampling series must be truncated and this
leads to truncation errors which worsen the quality of the reconstruction.

However, considering kernels with $L$ having compact support, the truncation
error can be avoided. In this case, the infinite sampling series computed at
any fixed $x\in 
\mathbb{R}
$ reduces to a finite one. Important examples of kernels with compact
support can be generated by using the well-known B-spline of order $n\in 
\mathbb{N}
,$ given by%
\begin{equation*}
M_{n}\left( x\right) :=\frac{1}{\left( n-1\right) !}\sum\limits_{j=0}^{n}%
\left( -1\right) ^{j}\binom{n}{j}\left( \frac{n}{2}+x-j\right) _{+}^{n-1},
\end{equation*}%
where the function $\left( x\right) _{+}:=\max \left\{ x,0\right\} $ is the
positive part of $x\in 
\mathbb{R}
$ (see \cite{Butzer1, Bardaro2, Vinti, COVI17, COVI20, COVI21}).

The Fourier transform of $M_{n}$ is given by%
\begin{equation*}
\text{\ \ }\widehat{M_{n}}\left( \upsilon \right) := \text{sinc}^{n}\left( 
	\frac{\upsilon }{2\pi }\right),\text{ }\upsilon \in 
\mathbb{R}
.
\end{equation*}%
Then, we have $\sum\limits_{k\in 
\mathbb{Z}
}M_{n}\left( u-k\right) =1,$ for every $u\in 
\mathbb{R}
,$ by Remark \ref{Rk}. Therefore, condition $\left( \mathcal{L} 3\right) $
reduces to $\sup\limits_{u\neq 0}\left\vert g_{w}\left( u\right)
/u-1\right\vert =O\left( w^{-\theta _{0}}\right) ,$ as $w\rightarrow +\infty
,$ for some $\theta _{0}>0$, which is again satisfied. Obviously, each $M_{n}$ is bounded on $%
\mathbb{R}
,$ with compact support on $\left[ -n/2,n/2\right] ,$ and hence $M_{n}\in
L^{1}\left( 
\mathbb{R}
\right) ,$ for all $n\in 
\mathbb{N}
$, with $\|M_n\|_1=m_{0,\Pi}(M_n)=1$. Further, condition $\left( \mathcal{L} 1\right) $ is satisfied for every $%
\beta _{0}>0$ (see \cite{Bardaro2}).

In this case, the nonlinear sampling Kantorovich operators based upon the
B-spline kernel of order $n,$ with $t_{k}=k,$ $k\in 
\mathbb{Z}
,$ are given by%
\begin{equation*}
\left( S_{w}^{M_{n}}f\right) \left( x\right) =\sum\limits_{k\in 
\mathbb{Z}
}M_{n}\left( wx-k\right) g_{w}\left( w\int\limits_{k/w}^{\left( k+1\right)
/w}f\left( u\right) du\right) ,\text{ \ \ \ \ \ \ \ }x\in 
\mathbb{R}
,
\end{equation*}%
for every $w>0,$ where $f:%
\mathbb{R}
\rightarrow 
\mathbb{R}
$ is a locally integrable function such that the above series is convergent
for every $x\in 
\mathbb{R}
.$ From Corollary \ref{Cr}, we obtain the following.

\begin{corollary}
Let $\varphi $ be a convex $\varphi -$function satisfying condition (H) with 
$\eta $ convex and $f\in L^{\varphi +\eta }$\bigskip $\left( 
\mathbb{R}
\right) .$ Then, for every $0<\alpha <1,$ there exist
constants $\mu >0$, and $\lambda _{0}>0$, such that%
\begin{equation*}
I^{\varphi }\left[ \mu \left( S_{w}^{M_{n}}f-f\right) \right] \leq \frac{1}{3} \omega
\left( f,1/w^{\alpha }\right) _{\eta }+\frac{1}{3}\omega \left( f,1/w\right)
_{\eta }+\frac{I^{\varphi }\left[ \lambda _{0}f\right] }{3}w^{-\theta _{0}},
\end{equation*}%
for sufficiently large $w>0,$ where $\theta _{0}>0$ is the constant
of condition $(\mathcal{L} 3).$
\end{corollary}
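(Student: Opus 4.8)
The plan is to derive this corollary directly from Corollary \ref{Cr} by specializing to the B-spline kernel $\chi(x,u)=M_n(x)g_w(u)$ and then simplifying the three constants that appear in the general estimate. The key observation is that the B-spline kernel of order $n$ has compact support on $[-n/2,n/2]$, so Corollary \ref{Cr} applies with $L=M_n$ and we immediately inherit the inequality
\begin{equation*}
I^{\varphi}\left[\mu\left(S_w^{M_n}f-f\right)\right]\leq\frac{\|M_n\|_1}{3\delta\, m_{0,\Pi}(M_n)}\,\omega\left(f,1/w^{\alpha}\right)_{\eta}+\frac{\Delta}{3\delta}\,\omega\left(f,\Delta/w\right)_{\eta}+\frac{I^{\varphi}[\lambda_0 f]}{3}w^{-\theta_0}.
\end{equation*}
The task is then purely arithmetic: insert the specific values coming from the uniform sampling scheme $t_k=k$ and the known normalization of the B-spline.

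First I would record the relevant constants. For the uniform scheme $t_k=k$ we have $\Delta_k=t_{k+1}-t_k=1$ for every $k$, hence $\delta=\Delta=1$. The text has already established $\|M_n\|_1=m_{0,\Pi}(M_n)=1$, which is the crucial simplification: the factor $\|M_n\|_1/(\delta\, m_{0,\Pi}(M_n))$ collapses to $1$, and $\Delta/\delta=1$ as well. Substituting these into the inherited inequality turns the first coefficient into $1/3$, the second coefficient into $1/3$, and leaves the third term as $\tfrac{1}{3}I^{\varphi}[\lambda_0 f]\,w^{-\theta_0}$, which is exactly the asserted bound with $\omega(f,\Delta/w)_{\eta}=\omega(f,1/w)_{\eta}$.

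Before substituting, I would verify that the hypotheses of Corollary \ref{Cr} are genuinely met by $M_n$. The compact-support requirement holds since $\operatorname{supp}M_n\subset[-n/2,n/2]$, so assumption (\ref{a1}) is automatically satisfied as noted in the remark preceding Corollary \ref{Cr}, with $(L2)$ valid for every $\beta_0>0$. The convexity of $\varphi$, condition (H) with $\eta$ convex, and the membership $f\in L^{\varphi+\eta}(\mathbb{R})$ are assumed in the statement. The one point needing the kernel-specific verification recalled in the surrounding text is condition $(\mathcal{L}3)$, i.e. that $\mathcal{T}_w(x)=\mathcal{O}(w^{-\theta_0})$; since $\widehat{M_n}(\upsilon)=\operatorname{sinc}^n(\upsilon/2\pi)$ yields $\sum_{k\in\mathbb{Z}}M_n(u-k)=1$ via Remark \ref{Rk}, condition $(\mathcal{L}3)$ reduces to $\sup_{u\neq0}|g_w(u)/u-1|=\mathcal{O}(w^{-\theta_0})$, which holds because $g_w(u)-u\to0$ uniformly.

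There is no genuine obstacle here: the corollary is a direct specialization, and the only care required is bookkeeping of the constants $\delta=\Delta=1$ and the normalization $\|M_n\|_1=m_{0,\Pi}(M_n)=1$ together with the identification $\theta_0$ of $(\chi 4)$ with the $\theta_0$ of $(\mathcal{L}3)$. The existence of $\mu>0$ and $\lambda_0>0$ is inherited verbatim from Corollary \ref{Cr}, so the proof amounts to citing that corollary, listing the kernel-specific constant values, and performing the substitution.
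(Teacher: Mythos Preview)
Your proposal is correct and follows exactly the paper's approach: the paper simply states ``From Corollary~\ref{Cr}, we obtain the following'' without giving an explicit proof, so your specialization to $L=M_n$ with the uniform scheme $t_k=k$ (yielding $\delta=\Delta=1$) and the normalization $\|M_n\|_1=m_{0,\Pi}(M_n)=1$ is precisely the intended argument.
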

As before, also for the operators $S_w^{M_n} f$ we may obtain a similar version of Corollary \ref{cor-Lip-varphi}. For other useful examples of kernels, see e.g., \cite{COGA1,COGA2,COGA3}.


\section*{Acknowledgments}

{\small The second and the third author are members of the Gruppo Nazionale per l'Analisi Matematica,
la Probabilit\`a e le loro Applicazioni (GNAMPA) of
the Istituto Nazionale di Alta Matematica (INdAM), and of the network RITA (Research ITalian network on Approximation). The second author has been partially supported within the 2020 GNAMPA-INdAM
Project ``Analisi reale, teoria della misura ed approssimazione per la ricostruzione di
immagini'', while the third author within
the projects: (1) Ricerca di Base 2018 dell'Universit\`a degli Studi
di Perugia - "Metodi di Teoria dell'Approssimazione,
Analisi Reale, Analisi Nonlineare e loro Applicazioni" ,
(2) Ricerca di Base 2019 dell'Universit\`a degli Studi di Perugia - "Integrazione, Approssimazione, Analisi Nonlineare e loro Applicazioni", 
(3) "Metodi e processi innovativi per lo sviluppo di una banca di
immagini mediche per fini diagnostici" funded by the
Fondazione Cassa di Risparmio di Perugia (FCRP), 2018, (4) "Metodiche di Imaging non invasivo mediante angiografia OCT sequenziale per lo studio delle Retinopatie degenerative dell'Anziano (M.I.R.A.)", funded by FCRP, 2019.
}

\end{document}